\title{Mini-course: Property of rapid decay}
\author{{\L}ukasz Garncarek}
\address{Institute of Mathematics\\ Polish Academy of Sciences\\ ul. Śniadeckich 8\\ 00-656 Warszawa (Poland)}
\email{lukgar@impan.pl} 
\thanks{This work was partially supported by the European Research
  Council (ERC) grant no. 259527 of G. Arzhantseva, and by the Polish
  National Science Center grant 2012/06/A/ST1/00259. The stay in
  Vienna was partially supported by the Erwin Schr\"odinger Institute.
}
\newtheorem{theorem}{Theorem}[section]
\newtheorem*{conjecture}{Conjecture}
\newtheorem{lemma}[theorem]{Lemma}
\newtheorem{proposition}[theorem]{Proposition}
\newtheorem{corollary}[theorem]{Corollary}
\theoremstyle{definition}
\newtheorem{definition}{Definition}
\newtheorem*{question}{Question}
\theoremstyle{remark}
\numberwithin{equation}{section}
\newcommand{\RR}{\mathbb{R}}
\newcommand{\ZZ}{\mathbb{Z}}
\newcommand{\NN}{\mathbb{N}}
\newcommand{\CC}{\mathbb{C}}
\DeclareMathOperator{\supp}{supp}
\DeclareMathOperator{\Stab}{Stab}
\newcommand{\norm}[1]{\left\lVert#1\right\rVert}
\newcommand{\abs}[1]{\left\lvert#1\right\rvert}
\newcommand{\set}[1]{\left\{#1\right\}}
\newcommand{\rc}{\widetilde{rc}}
\begin{document}

\maketitle

\begin{abstract}
  These are the notes for the mini-course on the property of rapid
  decay given at the Erwin Schr\"odinger Institute in Vienna between
  March 14th and 18th, 2016, as a part of the Measured Group Theory
  program. They include parts which were omitted during the lectures,
  and omit some parts which were included in the lectures (and now
  I think that they shouldn't be). We explore the definitions of
  property RD and its permanence properties, discuss the centroid
  property and relative centroid property defined by Mark Sapir, and
  give proofs that certain classes of groups have RD.
\end{abstract}

\section{Introduction}
\label{sec:introduction}

Let \(G\) be a group. For simplicity, we will restrict ourselves to finitely generated groups. With a suitable \emph{length function} \(\ell\) on \(G\), one may associate the \emph{space of rapidly decreasing functions} \(H_{\ell}^{\infty}(G)\). In short, property of rapid decay, abbreviated to \emph{property RD}, says that this space canonically embeds into \(C^{*}_{r}(G)\), the reduced \(C^{*}\)-algebra of \(G\). If, for example, \(G=\ZZ\), then \(C^{*}_{r}(G)\) consists of the Fourier transforms of continuous functions on the circle, and \(H_{\ell}^{\infty}(G)\) with \(\ell(x)=\abs{x}\) turns out to be the space of Fourier transforms of smooth functions on the circle. Thus, \(\ZZ\) has property RD, which can be thought of as a non-commutative analogue of the fact that smooth functions are continuous.

An easy consequence of property RD is that \(H_{\ell}^{\infty}(G)\) is a convolution algebra. We can therefore speak of its \(K\)-theory. It turns out that the embedding of \(H^{\infty}_{\ell}(G)\) into \(C^{*}_{r}(G)\) induces isomorphisms in \(K\)-theory \cite{Jolissaint1989}. This is relevant for the Baum-Connes conjecture, stating that a certain \emph{assembly map} \(\mu\colon K_{i}^{G}(\underline{E}G)\to K_{i}(C^{*}_{r}(G))\) is an isomorphism. Namely, for ``good'' groups with property RD, it is possible to construct an isomorphism \(K_{i}^{G}(\underline{E}G)\to K_{i}(H^{\infty}_{\ell}(G))\), whose composition with the induced isomorphism \(K_{i}(H^{\infty}_{\ell}(G)) \to K_{i}(C^{*}_{r}(G))\) is the assembly map \cite{Lafforgue2002}. 

These results were the source of initial motivation for introducing and studying property RD.

\section{Various formulations of property RD}
\label{sec:vari-form-prop}

In this section we will explore the equivalence of four different definitions of property RD. The first one is based on an estimate for the operator norm of elements of the group algebra acting on the space of square-integrable functions on the group.

The second and third definitions are formulated in terms of existence of continuous embeddings of certain Sobolev-type spaces into the group \(C^{*}\)-algebra.

Finally, the fourth definition is based on a decay estimate for matrix coefficients of the regular representation.

\subsection{Estimates on the operator norm of convolution operators}
\label{sec:estim-oper-norm}

Let \(G\) be a finitely generated group. A \emph{length function} on \(G\) is a map \(\ell\colon G\to [0,\infty)\) satisfying the following conditions:
\begin{enumerate}
\item \(\ell(1)=0\),
\item \(\ell(x^{-1})=\ell(x)\) for all \(x\in G\),
\item \(\ell(xy)\leq\ell(x)+\ell(y)\) for all \(x,y\in G\).
\end{enumerate}
A length function is \emph{proper} if the \(\ell\)-balls \(\set{x\in G : \ell(x)\leq C}\) are finite for every \(C>0\).

A standard example of length function is the word-length. If \(S\) is a symmetric generating set of \(G\), the word-length \(\ell_{S}\) is defined as
\begin{equation}
  \ell_{S}(x) = \min \{ n : x\in S^{n}\}.
\end{equation}
Another natural example is obtained from an isometric action of \(G\) on a metric space \((X,d)\). Any choice of a basepoint \(p\in X\) gives rise to a length function
\begin{equation}
  \ell_{p}(x) = d(p, xp).
\end{equation}
In fact, all length functions can be obtained from this construction.

For notational purposes we extend all length functions to the group algebra \(\CC[G]\) by putting
\begin{equation}
  \ell(f) = \max\{\ell(x) : f(x)\ne 0\}.
\end{equation}
In other words, this can be thought of as the radius of the support of \(f\).

If \(\ell_{1}\) and \(\ell_{2}\) are two length functions on \(G\), we say that \(\ell_{1}\) \emph{dominates} \(\ell_{2}\) if there exist constants \(C,s>0\) such that for all \(x\in G\)
\begin{equation}
  \ell_{2}(x)\leq C(1+\ell_{1}(x))^{s}.
\end{equation}

Observe, that in a finitely generated group the word-lengths dominate all other lengths, as for \(x\in G\) represented by a reduced word \(s_{1}\cdots s_{n}\) in generators from \(S\) we have
\begin{equation}
  \ell(x)=\ell(s_{1}\cdots s_{n}) \leq \sum_{i}\ell(s_{i}) \leq n \max_{s\in S}\ell(s) = \ell_{S}(x) \max_{s\in S}\ell(s).
\end{equation}

Now, let us proceed to the first definition of property RD. For
\(\CC[G]\) by \(\norm{f}_{2}\) we denote the \(\ell^{2}\)-norm of
\(f\), and by \(\norm{f}_{op}\) the operator norm of \(f\) acting on
\(\ell^{2}(G)\) by convolution (equivalently, by the left regular
representation \(\lambda\))
\begin{equation}
  f*\xi(x)=[\lambda(f)\xi](x)=\sum_{y\in G} f(y)\xi(y^{-1}x).
\end{equation}

\begin{definition} \label{def:norm-estimate}
  The group \(G\) satisfies \emph{property RD} with respect to a length function \(\ell\) is there exists a polynomial \(P(x)\) with positive coefficients, such that for all \(f\in\CC[G]\) the operator norm of \(f\) can be estimated by
  \begin{equation}
    \norm{f}_{op}\leq P(\ell(f))\norm{f}_{2}.
  \end{equation}
\end{definition}

We immediately see that if \(\ell_{2}\) dominates \(\ell_{1}\), then property RD with respect to \(\ell_{1}\) implies property RD with respect to \(\ell_{2}\):
\begin{equation}
  \norm{f}_{op} \leq P(\ell_{1}(f))\norm{f}_{2} \leq P(C(1+\ell_{2}(f))^{s})\norm{f}_{2} \leq  Q(\ell_{2}(f))\norm{f}_{2}.
\end{equation}
In particular, if \(G\) has property RD with respect to some length \(\ell\), then it has property RD with respect to the word-lengths. In this case we say that \(G\) has property RD, without specifying the length function.

\subsection{First examples}
\label{sec:first-examples}

Before continuing our exploration of the equivalent definitions of property RD, we will look at some examples.

\begin{proposition}\label{thm:poly-growth-rd}
  If \(G\) has polynomial growth with respect to a proper length function \(\ell\), then it has property RD with respect to \(\ell\).
\end{proposition}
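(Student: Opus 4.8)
The plan is to bound the operator norm of $f$ by its $\ell^1$-norm, and then to use the Cauchy--Schwarz inequality together with the polynomial growth hypothesis to control $\norm{f}_1$ in terms of $\norm{f}_2$ and the radius $\ell(f)$ of the support of $f$.

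First I would recall the elementary estimate $\norm{f}_{op}\leq\norm{f}_1$, valid for every $f\in\CC[G]$ and using nothing about $G$: writing $f*\xi=\sum_{y\in G}f(y)\,\lambda(y)\xi$ with $\lambda(y)\xi(x)=\xi(y^{-1}x)$, and using that each $\lambda(y)$ is an isometry of $\ell^2(G)$, the triangle inequality gives $\norm{f*\xi}_2\leq\sum_{y}\card{f(y)}\,\norm{\xi}_2=\norm{f}_1\norm{\xi}_2$. Next, since $\ell$ is proper the set $\supp(f)$ is finite, so Cauchy--Schwarz yields
\[
  \norm{f}_1=\sum_{x\in\supp(f)}\card{f(x)}\leq\card{\supp(f)}^{1/2}\norm{f}_2.
\]
Finally I would feed in polynomial growth: by definition there are constants $C,d>0$ with $\card{\set{x\in G:\ell(x)\leq n}}\leq C(1+n)^d$ for all $n\geq 0$ (properness of $\ell$ is exactly what makes these balls finite, hence the growth function well defined), and since $\supp(f)\subseteq\set{x\in G:\ell(x)\leq\ell(f)}$ we obtain $\card{\supp(f)}\leq C(1+\ell(f))^d$. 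Combining the three inequalities,
\[
  \norm{f}_{op}\leq C^{1/2}(1+\ell(f))^{d/2}\norm{f}_2\leq C^{1/2}(1+\ell(f))^{\lceil d/2\rceil}\norm{f}_2,
\]
so $P(x)=C^{1/2}(1+x)^{\lceil d/2\rceil}$, which has positive coefficients after expansion, witnesses property RD in the sense of Definition~\ref{def:norm-estimate}.

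I do not expect a serious obstacle: the argument is just ``$\norm{\cdot}_{op}\leq\norm{\cdot}_1$, plus Cauchy--Schwarz, plus counting''. The only minor points of care are that the exponent $d/2$ coming out of Cauchy--Schwarz need not be an integer, which is harmless since we may replace it by $\lceil d/2\rceil$, and that one should fix the meaning of ``polynomial growth with respect to $\ell$'' as polynomial boundedness of the function $n\mapsto\card{\set{x\in G:\ell(x)\leq n}}$. It is worth noting that the proof is quantitative: the degree of the RD polynomial may be taken to be essentially half the degree of polynomial growth.
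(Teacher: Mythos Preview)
Your proof is correct and follows essentially the same strategy as the paper's: both use Cauchy--Schwarz together with the polynomial growth bound to arrive at \(\norm{f}_{op}\leq \gamma(\ell(f))^{1/2}\norm{f}_2\). The only cosmetic difference is that you factor through the inequality \(\norm{f}_{op}\leq\norm{f}_1\) and then apply Cauchy--Schwarz to \(\norm{f}_1\), whereas the paper applies Cauchy--Schwarz directly inside the estimate of \(\norm{f*\xi}_2^2\); the resulting bound and the quantitative exponent \(d/2\) are identical.
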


\begin{proof}
  Let \(\gamma(n)=\abs{\set{x\in G : \ell(x) \leq n}}\) be the growth function of \(G\) with respect to \(\ell\). We have \(\gamma(n)\leq P(n)\) for some polynomial \(P\). Now, for \(f\in \CC[G]\) and \(\xi\in l^{2}(G)\), we get
  \begin{equation}
    \begin{split}
      \norm{f*\xi}_{2}^{2} & = \sum_{x\in G}\abs{\sum_{y\in
          G}f(y)\xi(y^{-1}x)}^{2} \leq \sum_{x\in G}\left(
        \norm{f}_{2}^{2} \sum_{y\in \supp{f}}
        \abs{\xi(y^{-1}x)}^{2}\right) = \\
      & = \norm{f}_{2}^{2}\sum_{y\in\supp{f}}\norm{\xi}_{2}^{2} \leq \gamma(\ell(f))\norm{f}_{2}^{2}\norm{\xi}_{2}^{2}\leq P(\ell(f)) \norm{f}_{2}^{2}\norm{\xi}_{2}^{2}. \qedhere
    \end{split} 
\end{equation}
\end{proof}

\begin{proposition} \label{prop:rd-amenable}
  Suppose \(G\) is amenable. Then it has property RD with respect to a proper length function \(\ell\) if and only if it has polynomial growth with respect to \(\ell\).
\end{proposition}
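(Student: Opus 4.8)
The plan is to observe that one implication is already done and concentrate on the other. Indeed, Proposition~\ref{thm:poly-growth-rd} shows that polynomial growth with respect to the proper length $\ell$ implies property RD with respect to $\ell$ (amenability is not even needed here). So the task reduces to proving the converse: if $G$ is amenable and has property RD with respect to $\ell$, then the growth function $\gamma(n)=\card{\set{x\in G:\ell(x)\le n}}$ is bounded by a polynomial.

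The idea is to feed the indicator functions of balls into the RD inequality. Since $\ell$ is proper, each ball $B_n=\set{x\in G:\ell(x)\le n}$ is finite, so $f_n:=\one_{B_n}$ lies in $\CC[G]$, with $\ell(f_n)\le n$, $\norm{f_n}_1=\gamma(n)$ and $\norm{f_n}_2=\gamma(n)^{1/2}$. The place where amenability is used is the claim that for a nonnegative $f\in\CC[G]$ one has $\norm{f}_{op}=\norm{f}_1$. The inequality $\norm{f}_{op}\le\norm{f}_1$ is automatic; for the reverse one invokes the characterization of amenability by almost-invariant vectors (Følner sequences / weak containment of the trivial representation in $\lambda$): there are unit vectors $\xi_k\in\ell^2(G)$ with $\norm{\lambda(y)\xi_k-\xi_k}\to 0$ for every $y\in G$, hence $\langle\lambda(y)\xi_k,\xi_k\rangle\to 1$ for every $y$, and since $f_n$ has finite support,
\begin{equation*}
  \langle\lambda(f_n)\xi_k,\xi_k\rangle=\sum_{y}f_n(y)\langle\lambda(y)\xi_k,\xi_k\rangle\xrightarrow[k\to\infty]{}\sum_y f_n(y)=\norm{f_n}_1,
\end{equation*}
so $\norm{f_n}_{op}\ge\norm{f_n}_1$.

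Putting the pieces together: $\norm{f_n}_{op}=\gamma(n)$, and since the polynomial $P$ from Definition~\ref{def:norm-estimate} has positive coefficients it is nondecreasing on $[0,\infty)$, so property RD gives
\begin{equation*}
  \gamma(n)=\norm{f_n}_{op}\le P(\ell(f_n))\norm{f_n}_2\le P(n)\,\gamma(n)^{1/2},
\end{equation*}
whence $\gamma(n)^{1/2}\le P(n)$, i.e. $\gamma(n)\le P(n)^2$. Thus $G$ has polynomial growth with respect to $\ell$.

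The only genuinely non-formal step is the identity $\norm{f}_{op}=\norm{f}_1$ for $f\ge 0$ on an amenable group, so that is where I expect to have to be careful — essentially just recalling the almost-invariant-vectors formulation of amenability and the Cauchy--Schwarz estimate $\card{\langle\lambda(y)\xi_k,\xi_k\rangle-1}\le\norm{\lambda(y)\xi_k-\xi_k}$; everything else is bookkeeping with balls and monotonicity of $P$.
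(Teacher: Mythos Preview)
Your proof is correct and follows essentially the same argument as the paper: feed the indicator functions of balls into the RD inequality and use that for nonnegative \(f\) on an amenable group \(\norm{f}_{op}=\norm{f}_{1}\), yielding \(\gamma(n)\le P(n)^{2}\). The only difference is cosmetic: the paper simply cites this identity as Kesten's characterization of amenability, whereas you sketch the almost-invariant-vectors argument for it and make the monotonicity of \(P\) explicit.
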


\begin{proof}
  The implication from right to left is true for any \(G\) by Proposition~\ref{thm:poly-growth-rd}. Now, assume that \(G\) is amenable, and has property RD with respect to \(\ell\). Denote by \(\gamma(n)\) the corresponding growth function. Define
  \begin{equation}
    f_{n}(x)=
    \begin{cases}
      1,& \text{if \(\ell(x)\leq n\)}\\
      0,&\text{otherwise}.
    \end{cases}
  \end{equation}
  By properness of \(\ell\), the function \(f_n\) is finitely supported. Clearly, \(\gamma(n)=\norm{f_{n}}_{1}\). Now we will apply Kesten's characterization of amenability, which states that in an amenable group for \(f\in \RR_{+}[G]\) one has \(\norm{f}_{op}=\norm{f}_{1}\). This way we obtain
  \begin{equation}
    \gamma(n)=\norm{f_{n}}_{1}=\norm{f_{n}}_{op} \leq P(n)\norm{f_{n}}_{2} = P(n) \gamma(n)^{1/2}.
  \end{equation}
Hence,
\begin{equation}
  \gamma(n) \leq P(n)^{2},
\end{equation}
and \(G\) has polynomial growth with respect to \(\ell\).
\end{proof}

Summarizing, if we take the word-length, we see that groups of polynomial growth always have property RD, groups of intermediate growth never do, and groups of exponential growth may have property RD, only if they are non-amena\-ble.

\subsection{Sobolev-type spaces}
\label{sec:sobolev-type-spaces}

Now, let us return to our definitions. The second one is purely technical, and will serve us in proving equivalence of the following ones.

For a length function \(\ell\), and \(s\in\RR\) we define the (pre-Hilbert) norm \(\norm {\cdot}_{\ell,s}\) on \(\CC[G]\) by
\begin{equation}
  \norm{f}_{\ell,s}^{2} = \sum_{x\in G} \abs{f(x)}^{2}(1+\ell(x))^{s}.
\end{equation}
The corresponding completion of \(\CC[G]\) will be denoted by \(H_{\ell}^{s}(G)\).

\begin{definition} \label{def:h-s}
  The group \(G\) has property RD with respect to a length function \(\ell\) if for some \(s>0\) the inclusion \(\CC[G]\subseteq C^{*}_{r}(G)\) of the group algebra into the reduced group \(C^{*}\)-algebra of \(G\) extends to a continuous operator \(H_{\ell}^{s}(G)\to C^{*}_{r}(G)\).
\end{definition}

\begin{proposition}
  Definitions \ref{def:norm-estimate} and \ref{def:h-s}are equivalent.
\end{proposition}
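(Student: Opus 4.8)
The plan is to show the two definitions are equivalent by relating the norm $\norm{f}_{\ell,s}$ to the quantity $P(\ell(f))\norm{f}_2$ on finitely supported functions. The key observation is that for $f\in\CC[G]$ with $\ell(f)\leq n$ one has the two-sided comparison $\norm{f}_2 \leq \norm{f}_{\ell,s} \leq (1+n)^{s/2}\norm{f}_2$, since every $x$ in the support satisfies $1\leq 1+\ell(x)\leq 1+n$. This means that on the subspace of functions supported in the $\ell$-ball of radius $n$, the norms $\norm{\cdot}_{\ell,s}$ and $\norm{\cdot}_2$ are equivalent with constants polynomial in $n$, which is exactly the regime in which both definitions operate.

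First I would prove that Definition~\ref{def:h-s} implies Definition~\ref{def:norm-estimate}. Assume the inclusion extends to a bounded operator $H_\ell^s(G)\to C^*_r(G)$ with norm $\leq K$, so that $\norm{f}_{op}\leq K\norm{f}_{\ell,s}$ for all $f\in\CC[G]$. Then for any $f$ with $\ell(f)=n$ we get
\begin{equation}
  \norm{f}_{op}\leq K\norm{f}_{\ell,s}\leq K(1+n)^{s/2}\norm{f}_2 = K(1+\ell(f))^{s/2}\norm{f}_2,
\end{equation}
and since $s>0$ we may absorb $K(1+x)^{s/2}$ into a polynomial $P(x)$ with positive coefficients (for instance $P(x) = K(1+x)^{\lceil s/2\rceil}$ works after noting $(1+x)^{s/2}\leq (1+x)^{\lceil s/2\rceil}$), giving Definition~\ref{def:norm-estimate}.

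For the converse, suppose $\norm{f}_{op}\leq P(\ell(f))\norm{f}_2$ with $\deg P = d$, say $P(x)\leq C(1+x)^d$ for a suitable constant $C$. The natural guess is that the inclusion extends continuously on $H_\ell^s(G)$ for $s = 2d + 2$ (any $s>2d$ should do). To verify this I would take $f\in\CC[G]$ and decompose it by support radius: write $f = \sum_{n\geq 0} f_n$ where $f_n$ is the restriction of $f$ to the sphere $\set{x : \ell(x)=n}$ — or more conveniently to annuli indexed so that $\ell(f_n)\le 2^n$ with the norms $(1+\ell(x))^s$ comparable to $2^{ns}$ on the support of $f_n$. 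Then $\norm{f_n}_{op}\leq P(\ell(f_n))\norm{f_n}_2 \aless 2^{nd}\norm{f_n}_2$, and by the triangle inequality in $C^*_r(G)$,
\begin{equation}
  \norm{f}_{op}\leq \sum_n \norm{f_n}_{op} \aless \sum_n 2^{nd}\norm{f_n}_2 = \sum_n 2^{-n(s/2-d)}\cdot 2^{ns/2}\norm{f_n}_2.
\end{equation}
Applying Cauchy--Schwarz to the sum over $n$, the factor $\sum_n 2^{-2n(s/2-d)}$ converges because $s/2 > d$, and $\sum_n 2^{ns}\norm{f_n}_2^2 \aless \norm{f}_{\ell,s}^2$ since the weights $2^{ns}$ dominate $(1+\ell(x))^s$ on the annuli. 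This yields $\norm{f}_{op}\aless \norm{f}_{\ell,s}$, so the inclusion is bounded for $\norm{\cdot}_{\ell,s}$ and extends to $H_\ell^s(G)$.

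The main obstacle is the dyadic decomposition bookkeeping in the converse direction: one must be careful that $P(\ell(f_n))$ is controlled by the support radius of the block $f_n$ (not of $f$), that the triangle inequality is applied in the operator norm rather than the $\ell^2$-norm, and that the geometric series and Cauchy--Schwarz are combined correctly so that the exponent condition $s>2\deg P$ emerges cleanly. The forward direction is essentially immediate from the two-sided norm comparison on a fixed ball. One should also note at the end that, since property RD for a given polynomial degree is what matters, the precise value of $s$ is irrelevant — increasing $s$ only enlarges the space $H_\ell^s(G)$ in the appropriate sense, so the two definitions pick out the same class of groups and length functions.
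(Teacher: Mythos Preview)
Your proposal is correct and follows essentially the same strategy as the paper: the forward direction uses the pointwise bound $(1+\ell(x))^{s}\leq (1+\ell(f))^{s}$ on the support of $f$, and the converse decomposes $f$ into radial pieces, applies the RD estimate to each, sums via the triangle inequality in the operator norm, and closes with Cauchy--Schwarz. The only cosmetic difference is that the paper uses unit annuli $A_{n}=\{x:\ell(x)\in[n,n+1)\}$ and the convergent series $\sum (1+n)^{-2}$, whereas you use dyadic annuli and a geometric series; both give the same exponent condition $s>2\deg P$.
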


\begin{proof}
  If \(G\) satisfies Definition 2, then there exist constants \(C,s>0\) such that for \(f\in \CC[G]\) we have
  \begin{equation}
    \norm{f}_{op}^{2}\leq C \norm{f}_{\ell,s}^{2} = C\sum_{x\in G} \abs{f(x)}^{2}(1+\ell(x))^{s}\leq C(1+\ell(f))^{s} \norm{f}_{2}^{2}, 
  \end{equation}
  yielding property RD in the sense of Definition 1.

  Now, assume that all \(f\in\CC[G]\) satisfy the estimate
  \begin{equation}
    \norm{f}_{op}\leq P(\ell(f))\norm{f}_{2}
  \end{equation}
  for some polynomial \(P\). Then, there exist \(C,s>0\) such that for \(t\geq 0\) we have \(P(t)\leq C(1+t)^{s}\). If we denote \(A_{n}=\set{x\in G : \ell(x)\in [n,n+1)}\), and \(f_{n} = f|_{A_{n}}\), we may estimate
  \begin{equation}
    \begin{split}
      \norm{f}_{op} & \leq \sum_{n} \norm{f_{n}}_{op} \leq \sum_{n}
      P(n)\norm{f_{n}}_{2} = \sum_{n} (1+n)P(n)\norm{f_{n}}_{2}
      (1+n)^{-1} \leq \\
      & \leq \left( \sum_{n} (1+n)^{-2}\right)^{1/2} \left( \sum_{n} (1+n)^{2}P(n)^{2}\norm{f_{n}}_{2}^{2}\right)^{1/2} \leq\\
      & \leq \frac{\pi}{\sqrt{6}} \left( \sum_{n} C(1+n)^{s}\norm{f_{n}}_{2}^{2} \right)^{1/2} 1\leq C' \norm{f}_{\ell,s},
    \end{split}
  \end{equation}
  where \(C\) and \(s\) satisfy \((1+t)^{2}P(t)^{2}\leq C(1+t)^{s}\).
\end{proof}

Now, observe that the spaces \(H_{\ell}^{s}(G)\) together with the canonical inclusions \(j_{st}\colon H_{\ell}^{s}(G)\to H_{\ell}^{t}(G)\), where \(s>t\), form an inverse system of Banach spaces and contractive maps. We may thus define the \emph{space of rapidly decreasing functions} on \(G\) with respect to \(\ell\) as the inverse limit
\begin{equation}
  H^{\infty}_{\ell}(G)=\varprojlim_{s} H^{s}_{\ell}(G).
\end{equation}
In our situation, this is just the intersection of the spaces \(H^{s}_{\ell}(G)\), equipped with the minimal topology for which the inclusions \(H^{\infty}_{\ell}(G)\subseteq H^{s}_{\ell}(G)\) are continuous.

\begin{definition} \label{def:h-infty}
  The group \(G\) has property RD with respect to a length function \(\ell\) if the inclusion \(\CC[G]\subseteq C^{*}_{r}(G)\) extends to a continuous operator \(H^{\infty}_{\ell}(G)\to C^{*}_{r}(G)\).
\end{definition}

The proof of equivalence of the former definitions with Definition \ref{def:h-infty} will be based on the following general lemma about inverse limits of sequences of Banach spaces.

\begin{lemma}\label{thm:inv-lim-operator}
  Suppose that \((X_{s})_{s\in\RR}\) is an inverse system of Banach spaces with injective bonding maps \(J_{st}\in {\mathcal {L}}(X_{s}, X_{t})\) for \(s>t\), and let
  \begin{equation}
    X=\varprojlim X_{s}.
  \end{equation}
Then for any Banach space \(Y\), and any bounded operator \(T\colon X\to Y\) there exists \(s\in\RR\), and a bounded operator \(T'\colon \overline{J_{s}(X)}\to Y\) such that \(T=T'J_{s}\), where \(J_{s}\colon X\to X_{s}\) is the natural projection of the inverse limit.
\end{lemma}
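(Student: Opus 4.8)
The plan is to use only the definition of $X=\varprojlim X_{s}$ as a topological vector space carrying the weakest topology for which all projections $J_{s}\colon X\to X_{s}$ are continuous, together with two structural features of the system — that $\RR$ is directed and that the bonding maps are bounded — to reduce the whole statement to a single index $s$, after which the conclusion follows from the standard fact that a bounded operator into a Banach space extends from a dense subspace to its closure.

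First I would extract a seminorm estimate for $T$. Since $T$ is a bounded (hence continuous) linear operator and $Y$ is normed, $T^{-1}\set{y\in Y:\norm{y}_{Y}<1}$ is a neighbourhood of $0$ in $X$, so it contains a basic neighbourhood for the initial topology: there are finitely many indices $s_{1},\dots,s_{k}$ and an $\varepsilon>0$ such that $\max_{i}\norm{J_{s_{i}}x}_{X_{s_{i}}}<\varepsilon$ forces $\norm{Tx}_{Y}<1$, and by homogeneity this becomes
\[
  \norm{Tx}_{Y}\ \le\ \frac{1}{\varepsilon}\,\max_{1\le i\le k}\norm{J_{s_{i}}x}_{X_{s_{i}}},\qquad x\in X.
\]
Now I would pick $s\in\RR$ with $s>s_{i}$ for every $i$ (here directedness of the index set is used), so that $J_{s,s_{i}}\colon X_{s}\to X_{s_{i}}$ is a bonding map and compatibility of the inverse system gives $J_{s_{i}}=J_{s,s_{i}}\circ J_{s}$; hence $\norm{J_{s_{i}}x}_{X_{s_{i}}}\le\norm{J_{s,s_{i}}}\,\norm{J_{s}x}_{X_{s}}$. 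Setting $C=\varepsilon^{-1}\max_{i}\norm{J_{s,s_{i}}}$ we obtain the single-index bound $\norm{Tx}_{Y}\le C\,\norm{J_{s}x}_{X_{s}}$ for all $x\in X$.

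With this in hand I would define $T'$ on the subspace $J_{s}(X)\subseteq X_{s}$ by $T'(J_{s}x)=Tx$. The estimate makes this well defined — if $J_{s}x=J_{s}x'$ then $\norm{Tx-Tx'}_{Y}\le C\norm{J_{s}(x-x')}_{X_{s}}=0$ — and simultaneously shows $\norm{T'\xi}_{Y}\le C\norm{\xi}_{X_{s}}$ for $\xi\in J_{s}(X)$; linearity is immediate. Since $J_{s}(X)$ is dense in $\overline{J_{s}(X)}$ and $Y$ is complete, $T'$ extends uniquely to a bounded operator $\overline{J_{s}(X)}\to Y$, still denoted $T'$, and $T'J_{s}=T$ holds by construction. (Note that injectivity of the bonding maps is not actually needed for this argument; in the applications it only serves to realise $X$ concretely as $\bigcap_{s}X_{s}$.) The only step that uses anything beyond soft generalities is the collapse to a single index $s$: that is precisely where the inverse-system structure enters, and it is the part I would be most careful to get right; everything else is the routine dense-extension principle.
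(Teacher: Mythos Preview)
Your proof is correct and follows the same overall strategy as the paper's --- show that continuity of \(T\) is controlled by a single seminorm \(x\mapsto\norm{J_{s}x}_{X_{s}}\), then extend by completeness of \(Y\) --- but the execution differs. The paper first passes to the countable cofinal subsystem \((X_{n})_{n\in\NN}\), writes down an explicit translation-invariant metric
\[
  d(x,y)=\sum_{n}2^{-n}\min\{1,\norm{J_{n}(x-y)}_{X_{n}}\}
\]
on \(X\), truncates it to \(d_{N}\), and argues that for large \(N\) the tail is uniformly smaller than \(r/2\), so the truncated metric already makes \(T\) continuous; it then checks that \(d_{N}\) is equivalent to the pullback of the norm of \(X_{N}\) through \(J_{N}\). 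You instead use the initial-topology description directly: a basic neighbourhood of \(0\) involves only finitely many indices, and directedness of \(\RR\) collapses them to a single \(s\) via the bonding maps. Your route is shorter and avoids the metric bookkeeping; the paper's route is more concrete but arrives at the same place. Your parenthetical remark that injectivity of the bonding maps is not needed for the lemma itself is also correct.
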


\begin{proof}
  The inverse limit will not change if we replace \((X_{s})\) by the cofinal  system \((X_{n})_{n\in\NN}\), hence the topology of \(X\) is compatible with the translation-invariant metric
  \begin{equation}
    d(x,y)=\sum_{n} \frac{1}{2^{n}}\min\{1, \norm{J_{n}(x-y)}_{X_{n}}\}.
  \end{equation}
Since \(T\) is continuous, there exists \(r>0\) such that the ball \(B_{X}(r)\) of radius \(r\) around \(0\) in \(X\) is mapped by \(T\) into the unit ball of \(Y\). Now, consider the metric
\begin{equation}
  d_{N}(x,y)=\sum_{n\leq N} \frac{1}{2^{n}}\min\{1, \norm{J_{n}(x-y)}_{X_{n}}\}
\end{equation}
on \(X\) (incompatible with the topology), and choose \(N\) sufficiently large, so that
\begin{equation}
  \abs{d(x,y)-d_{N}(x,y)} = \sum_{n>N} \frac{1}{2^{n}}\min\{1, \norm{J_{n}(x-y)}_{X_{n}}\} \leq  \sum_{n>N} \frac{1}{2^{n}} < \frac{r}{2}.
\end{equation}
Thanks to this estimate, the \(d_{N}\)-ball \(B_{(X,d_{N})}(r/2)\) is
contained in \(B_{(X,d)}(r)\), so \(T\) is continuous with respect to
the metric \(d_{N}\) on \(X\). But
\begin{equation}
  \begin{split}
    \min\{1, \norm{J_{N}(x-y)}_{X_{N}}\} & \leq \sum_{n \leq N}
    \frac{1}{2^{n}} \min\{1, \norm{J_{n}(x-y)}_{X_{n}}\} \leq \\
    & \leq
    \sum_{n\leq N} \frac{1}{2^{n}} \min\{1,
    \norm{J_{Nn}}\norm{J_{N}(x-y)}_{X_{N}}\} \leq \\
    & \leq \left(\sum_{n\leq N} \frac{1+\norm{J_{Nn}}}{2^{n}}\right) \min\{1,
    \norm{J_{N}(x-y)}_{X_{N}}\},
  \end{split}
\end{equation}
so \(d_{N}\) gives rise to the same topology on \(X\) as the pullback
of the norm on \(X_{N}\) through \(J_{N}\). With this norm however,
\(J_{N}\) is an isometric embedding of \(X\) into the Banach space
\(\overline{J_{N}(X)}\subseteq X_{N}\), and thus \(T\) can be extended
to a continuous operator \(T'\colon \overline{J_{N}(X)} \to Y\), as required.
\end{proof}

\begin{proposition}
Definitions \ref{def:h-s} and \ref{def:h-infty} are equivalent.  
\end{proposition}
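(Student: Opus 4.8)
The plan is to prove the two implications separately. For the direction Definition~\ref{def:h-s} $\then$ Definition~\ref{def:h-infty}, observe that if the inclusion $\CC[G]\subseteq C^*_r(G)$ extends to a continuous operator $H^s_\ell(G)\to C^*_r(G)$ for some $s>0$, then since $H^\infty_\ell(G)$ carries a topology making each inclusion $H^\infty_\ell(G)\subseteq H^t_\ell(G)$ continuous, in particular $H^\infty_\ell(G)\subseteq H^s_\ell(G)$ is continuous, and composing with the given extension $H^s_\ell(G)\to C^*_r(G)$ produces the desired continuous operator $H^\infty_\ell(G)\to C^*_r(G)$. This direction is essentially formal and should take one or two sentences.

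For the converse, Definition~\ref{def:h-infty} $\then$ Definition~\ref{def:h-s}, I would invoke Lemma~\ref{thm:inv-lim-operator}. We apply it with the inverse system $X_s = H^s_\ell(G)$, whose bonding maps $j_{st}$ are the canonical inclusions for $s>t$, and which are injective since $\CC[G]$ is dense in each $H^s_\ell(G)$ and the identity on $\CC[G]$ is clearly injective; the inverse limit is $X = H^\infty_\ell(G)$. Taking $Y = C^*_r(G)$ and $T$ the continuous operator $H^\infty_\ell(G)\to C^*_r(G)$ furnished by Definition~\ref{def:h-infty}, the lemma produces some $s\in\RR$ and a bounded operator $T'\colon \overline{j_s(H^\infty_\ell(G))}\to C^*_r(G)$ with $T = T' j_s$. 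Since $\CC[G]\subseteq H^\infty_\ell(G)$ and $j_s$ restricted to $\CC[G]$ is the identity inclusion into $H^s_\ell(G)$, the closure $\overline{j_s(H^\infty_\ell(G))}$ contains $\CC[G]$ and hence equals $H^s_\ell(G)$ (as $\CC[G]$ is dense there). Thus $T'$ is a bounded operator $H^s_\ell(G)\to C^*_r(G)$ extending the inclusion $\CC[G]\subseteq C^*_r(G)$, which is exactly Definition~\ref{def:h-s}.

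One technical point to check is that $s$ may be required to be positive in Definition~\ref{def:h-s}, whereas the lemma only yields $s\in\RR$; but if the inclusion extends continuously on $H^s_\ell(G)$ for some $s\leq 0$, then since $\norm{f}_{\ell,s}\leq \norm{f}_{\ell,t}$ for $s\leq t$ (because $(1+\ell(x))^s\leq(1+\ell(x))^t$ when $1+\ell(x)\geq 1$), the inclusion $H^t_\ell(G)\to H^s_\ell(G)$ is contractive for any $t>s$, so precomposing gives a continuous extension on $H^t_\ell(G)$ with $t>0$. Another point worth a remark is verifying that the topology on $H^\infty_\ell(G)$ described in the excerpt (the minimal topology making the inclusions into the $H^s_\ell(G)$ continuous) genuinely coincides with the inverse-limit topology used in Lemma~\ref{thm:inv-lim-operator}, but this is immediate from the definition of the inverse limit topology. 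The main obstacle is not really an obstacle: the whole content has been isolated into Lemma~\ref{thm:inv-lim-operator}, so the proposition is a matter of correctly identifying the hypotheses of the lemma in the present setting, with the only mild subtleties being the density arguments identifying $\overline{j_s(X)}$ with $H^s_\ell(G)$ and the positivity normalization of $s$.
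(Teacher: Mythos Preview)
Your proposal is correct and follows essentially the same approach as the paper: the easy direction is the composition $H^\infty_\ell(G)\to H^s_\ell(G)\to C^*_r(G)$, and the converse is a direct application of Lemma~\ref{thm:inv-lim-operator}. You supply more detail than the paper does---in particular the density argument identifying $\overline{j_s(H^\infty_\ell(G))}$ with $H^s_\ell(G)$ and the remark on the sign of $s$---but these are exactly the right points to check and the paper simply leaves them implicit.
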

\begin{proof}
  If \(G\) satisfies Definition 2, then for some \(s\) we have a bounded operator \(H_{\ell}^{s}(G)\to C^{*}_{r}(G)\) extending the inclusion of the group algebra. But the inclusion \(\CC[G]\subseteq H_{\ell}^{s}(G)\) factorizes as \(\CC[G]\subseteq H_{\ell}^{\infty}(G)\subseteq H_{\ell}^{s}(G)\), thus giving a continuous operator \(H_{\ell}^{\infty}(G)\to C^{*}_{r}(G)\).

On the other hand, if \(G\) satisfies Definition 3, Definition 2 is a consequence of Lemma~\ref{thm:inv-lim-operator}.
\end{proof}

In the Introduction we remarked that for groups with property RD the space of rapidly decreasing functions is a convolution algebra. We will now prove this.

\begin{proposition}
  If \(G\) satisfies property RD with respect to \(\ell\), then \(H^{\infty}_{\ell}(G)\) is closed under convolution.
\end{proposition}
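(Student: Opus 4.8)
My plan is to bound $\norm{f*g}_{\ell,s}$ for each real $s\ge 0$ directly in terms of weighted $\ell^2$-norms of $f$ and $g$, the operator-norm estimate of property RD entering through an auxiliary, reweighted copy of $f$. Since the norms $\norm{\cdot}_{\ell,s}$ are non-decreasing in $s$ (as $1+\ell(x)\ge 1$), proving $f*g\in H^\infty_\ell(G)$ amounts to proving $\norm{f*g}_{\ell,s}<\infty$ for every $s\ge 0$ (once these are finite, the finitely supported truncations of $f*g$ converge to it in every $H^{s}_\ell(G)$, placing it in the inverse limit). Note also that for $f,g\in H^\infty_\ell(G)\subseteq\ell^2(G)$ the formula $f*g(x)=\sum_y f(y)g(y^{-1}x)$ defines $f*g$ as an honest function, each sum converging absolutely by Cauchy--Schwarz. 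I would then fix, using property RD in the form of Definition~\ref{def:norm-estimate}, constants $C,s_0>0$ with $\norm{h}_{op}\le C\norm{h}_{\ell,s_0}$ for all $h\in\CC[G]$, and for a function $h$ and $t\ge 0$ let $h_{[t]}$ denote $x\mapsto\abs{h(x)}(1+\ell(x))^{t/2}$; the only facts I need are that $h_{[t]}$ has the same support as $h$ and that $\norm{h_{[t]}}_{\ell,u}=\norm{h}_{\ell,t+u}$ for all $u$, so $h_{[t]}\in\ell^2(G)$ whenever $h\in H^\infty_\ell(G)$.

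The main inequality I would use is the submultiplicative one $1+\ell(xy)\le(1+\ell(x))(1+\ell(y))$, immediate from subadditivity and non-negativity of $\ell$. Applied with the factorization $x=y\cdot(y^{-1}x)$ it gives, for $s\ge 0$,
\begin{equation*}
  \abs{f*g(x)}\,(1+\ell(x))^{s/2}\le\sum_y\abs{f(y)}\,\abs{g(y^{-1}x)}\,(1+\ell(x))^{s/2}\le\sum_y f_{[s]}(y)\,g_{[s]}(y^{-1}x)=\bigl(f_{[s]}*g_{[s]}\bigr)(x),
\end{equation*}
hence $\norm{f*g}_{\ell,s}\le\norm{f_{[s]}*g_{[s]}}_2$. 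If $f$ happens to be finitely supported, then $f_{[s]}\in\CC[G]$ while $g_{[s]}\in\ell^2(G)$, so property RD applies to $f_{[s]}$ and
\begin{equation*}
  \norm{f*g}_{\ell,s}\le\norm{f_{[s]}}_{op}\,\norm{g_{[s]}}_2\le C\,\norm{f_{[s]}}_{\ell,s_0}\,\norm{g_{[s]}}_2=C\,\norm{f}_{\ell,s+s_0}\,\norm{g}_{\ell,s}<\infty.
\end{equation*}

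To pass to a general $f\in H^\infty_\ell(G)$, I would fix an increasing exhaustion $F_1\subseteq F_2\subseteq\cdots$ of $G$ by finite sets and set $f^{(n)}=f\cdot\one_{F_n}\in\CC[G]$. The previous step applies to each $f^{(n)}$ and gives $\norm{f^{(n)}*g}_{\ell,s}\le C\norm{f^{(n)}}_{\ell,s+s_0}\norm{g}_{\ell,s}\le C\norm{f}_{\ell,s+s_0}\norm{g}_{\ell,s}$, a bound uniform in $n$. Since $f^{(n)}*g(x)=\sum_{y\in F_n}f(y)g(y^{-1}x)$ is a partial sum of the absolutely convergent series defining $f*g(x)$, we have $f^{(n)}*g\to f*g$ pointwise, so Fatou's lemma yields $\norm{f*g}_{\ell,s}\le\liminf_n\norm{f^{(n)}*g}_{\ell,s}\le C\norm{f}_{\ell,s+s_0}\norm{g}_{\ell,s}<\infty$. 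As $s\ge 0$ was arbitrary, this would complete the argument. I expect the only genuine subtlety to be exactly this last maneuver: because $\ell$ is not assumed proper here, one cannot truncate $f$ to an $\ell$-ball and land in $\CC[G]$, so the reduction to the finitely supported case must go through an exhaustion by arbitrary finite sets plus a Fatou argument; everything else — in particular that the reweighted functions $f_{[s]},g_{[s]}$ still lie in the spaces where the estimates are available — is bookkeeping handled by the identity $\norm{h_{[t]}}_{\ell,u}=\norm{h}_{\ell,t+u}$.
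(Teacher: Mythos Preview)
Your proof is correct and follows the same core idea as the paper: shift the weight $(1+\ell(x))^{s/2}$ across the convolution via the submultiplicativity $1+\ell(xy)\le(1+\ell(x))(1+\ell(y))$, reducing $\norm{f*g}_{\ell,s}$ to an ordinary $\ell^2$-norm of a convolution of reweighted functions, and then invoke property RD to bound the operator norm of the reweighted $f$. The only difference is in how that last step is executed: the paper appeals directly to the already-established continuous embedding $H^\infty_\ell(G)\hookrightarrow C^*_r(G)$ (Definition~\ref{def:h-infty}), so the reweighted function $f_s\in H^\infty_\ell(G)$ immediately has finite operator norm and no approximation is needed; you instead work from the $\CC[G]$-level estimate and pass to general $f$ by truncation plus Fatou. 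Your route is slightly longer but more self-contained (and, as you note, robust against $\ell$ not being proper); the paper's is shorter because it uses the stronger form of RD already proved equivalent.
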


\begin{proof}
  Let \(f,g\in H_{\ell}^{\infty}(G)\). For \(s>0\) we have
  \begin{equation}
    \begin{split}
      \norm{f*g}^{2}_{\ell,2s} & = \sum_{x\in G}\abs{\sum_{y\in G} f(y)g(y^{-1}x)(1+\ell(x))^{s}}^{2} \leq \\
    & \leq \sum_{x\in G}\abs{\sum_{y\in G} \abs{f(y)}(1+\ell(y))^{2}\abs{g(y^{-1}x)}(1+\ell(y^{-1}x))^{s}}^{2} = \\
    & = \norm{f_{s}*g_{s}}_{2}^{2},
    \end{split}
  \end{equation}
  where \(f_{s}(x)=\abs{f(x)}(1+\ell(x))^{2}\), and similarly with \(g_{s}\). Since \(f\) and \(g\) are rapidly decreasing, so is \(f_{s}\), and
  \begin{equation}
    \norm{f_{s}* g_{s}}_{2} \leq \norm{f_{s}}_{op} \norm{g}_{\ell,2s} < \infty.
  \end{equation}
  This means that \(f*g\in H^{s}_{\ell}(G)\) for all \(s\), so it is rapidly decreasing.
\end{proof}

\subsection{Some remarks}
\label{sec:some-remarks}

During the mini-course a question was raised, whether the embeddings
\(H^{s}_{\ell}(G)\to H^{t}_{\ell}(G)\) for \(s>t\) are compact
operators. The following proposition answers this question.

\begin{proposition}
  The canonical embedding \(H^{s}_{\ell}(G)\to H^{t}_{\ell}(G)\) is
  compact if and only if the length \(\ell\) is proper.
\end{proposition}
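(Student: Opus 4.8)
The plan is to prove both implications by exploiting the diagonal nature of the bonding map $j_{st}\colon H^s_\ell(G)\to H^t_\ell(G)$, which in the natural basis $(\delta_x)_{x\in G}$ acts as multiplication of the $x$-th coordinate by $(1+\ell(x))^{(t-s)/2}$. Since $t-s<0$, this is a diagonal operator with eigenvalues $\mu_x=(1+\ell(x))^{(t-s)/2}$, and a diagonal operator on $\ell^2$ is compact precisely when its sequence of eigenvalues tends to $0$, i.e. when for every $\varepsilon>0$ only finitely many $x\in G$ satisfy $\mu_x\geq\varepsilon$.

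First I would set up the identification carefully: the map $f\mapsto \bigl(f(x)(1+\ell(x))^{s/2}\bigr)_{x\in G}$ is an isometric isomorphism $H^s_\ell(G)\to\ell^2(G)$, and under these identifications (for $s$ and for $t$) the embedding $j_{st}$ becomes the diagonal multiplication operator $M$ with $(M\xi)(x)=(1+\ell(x))^{(t-s)/2}\xi(x)$. So the question reduces to: when is $M$ compact?

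For the ``if'' direction, assume $\ell$ is proper. Fix $\varepsilon>0$; since $t-s<0$, the condition $(1+\ell(x))^{(t-s)/2}\geq\varepsilon$ is equivalent to $\ell(x)\leq \varepsilon^{2/(t-s)}-1$, which by properness defines a finite set. Hence $M$ is the norm-limit of the finite-rank operators obtained by truncating to these finite sets, so $M$ is compact. For the ``only if'' direction, suppose $\ell$ is not proper, so some ball $B_C=\{x:\ell(x)\leq C\}$ is infinite. Then $(1+\ell(x))^{(t-s)/2}\geq (1+C)^{(t-s)/2}>0$ for all $x\in B_C$, so $M$ restricted to $\ell^2(B_C)$ is bounded below by a positive constant; applying $M$ to an infinite orthonormal family supported in $B_C$ yields a bounded sequence with no convergent subsequence (its image vectors are pairwise orthogonal with norms bounded away from $0$), so $M$ is not compact.

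I do not expect a serious obstacle here; the only mildly delicate point is stating the isometric identifications $H^s_\ell(G)\cong\ell^2(G)$ cleanly and checking that $j_{st}$ really is the claimed diagonal operator on the completions (not just on $\CC[G]$), which is immediate by density. The rest is the standard fact that a diagonal operator is compact iff its entries vanish at infinity, combined with the definition of properness.
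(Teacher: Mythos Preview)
Your proof is correct and follows essentially the same approach as the paper: both conjugate the embedding by the unitary isomorphisms \(f\mapsto f\cdot(1+\ell)^{s/2}\) to reduce to a diagonal multiplication operator on \(\ell^{2}(G)\), and then characterize its compactness in terms of the multiplier \((1+\ell)^{(t-s)/2}\). If anything, your version is slightly more explicit in verifying the compactness criterion for diagonal operators, which the paper simply states.
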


\begin{proof}
  For \(s\in \RR\), define \(U_{s}\colon
  H^{s}_{\ell}(G)\to\ell^{2}(G)\) by
  \begin{equation}
    (U_{s}f)(x) = f(x)(1+\ell(x))^{s/2}.
  \end{equation}
  This is clearly a unitary isomorphism. Now, if \(s>t\), and \(J\colon
  H^{s}_{\ell}(G)\to H^{t}_{\ell}(G)\) is the canonical embedding,
  then
  \begin{equation}
    (U_{t}J U_{s}^{-1}f)(x) = f(x)(1+\ell(x))^{(t-s)/2}.
  \end{equation}
  The compactness of \(J\) is thus equivalent to the compactness of
  the above multiplication operator. It is compact if and only if two
  conditions are satisfied:
  \begin{enumerate}
  \item every value of \((1+\ell(x))^{(t-s)/2}\) is attained only
    finitely many times,
  \item the only accumulation point of \((1+\ell(x))^{(t-s)/2}\) is \(0\).
  \end{enumerate}
  Since \((t-s)/2 < 0\), these two conditions are equivalent to
  properness of \(\ell\).
\end{proof}

Since we are already here, let us prove a corollary of this
observation.

\begin{corollary}
  If \(G\) is infinite, and \(\ell\) is proper, then the space of
  rapidly decreasing functions \(H^{\infty}_{\ell}(G)\) is not a
  Banach space.
\end{corollary}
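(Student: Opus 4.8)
I would argue by contradiction, using the inverse-limit machinery already set up. Suppose $H^{\infty}_{\ell}(G)$ is a Banach space, so that the identity is a bounded operator into the Banach space $Y:=H^{\infty}_{\ell}(G)$. Apply Lemma~\ref{thm:inv-lim-operator} to the inverse system $(H^{s}_{\ell}(G))_{s\in\RR}$, whose bonding maps are the injective canonical inclusions, with $T=\id$. This yields an index $s$ and a bounded operator $T'\colon\overline{J_{s}(H^{\infty}_{\ell}(G))}\to H^{\infty}_{\ell}(G)$ with $\id=T'J_{s}$, where $J_{s}\colon H^{\infty}_{\ell}(G)\to H^{s}_{\ell}(G)$ is the canonical projection, i.e.\ the inclusion. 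Since $\CC[G]\subseteq H^{\infty}_{\ell}(G)$ is dense in $H^{s}_{\ell}(G)$, we have $\overline{J_{s}(H^{\infty}_{\ell}(G))}=H^{s}_{\ell}(G)$, so $T'$ is defined on all of $H^{s}_{\ell}(G)$.

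Next I would show that $J_{s}$ is a topological isomorphism onto $H^{s}_{\ell}(G)$. One composition, $T'J_{s}=\id_{H^{\infty}_{\ell}(G)}$, is given; for the other, $J_{s}T'$ is a bounded operator on $H^{s}_{\ell}(G)$ that agrees with the identity on the dense subspace $H^{\infty}_{\ell}(G)$ (indeed $J_{s}T'(J_{s}f)=J_{s}f$ for $f\in H^{\infty}_{\ell}(G)$), hence $J_{s}T'=\id_{H^{s}_{\ell}(G)}$. Thus $J_{s}$, which is literally the inclusion map, is a bijection, so $H^{\infty}_{\ell}(G)=H^{s}_{\ell}(G)$ as sets of functions on $G$. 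Since always $H^{s+1}_{\ell}(G)\subseteq H^{s}_{\ell}(G)$ while now $H^{s}_{\ell}(G)=H^{\infty}_{\ell}(G)\subseteq H^{s+1}_{\ell}(G)$, we get $H^{s}_{\ell}(G)=H^{s+1}_{\ell}(G)$, so the canonical inclusion $H^{s+1}_{\ell}(G)\hookrightarrow H^{s}_{\ell}(G)$ is a bijection.

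To conclude I would invoke the preceding proposition: as $\ell$ is proper, this inclusion is a compact operator. A bijective compact operator between Banach spaces has bounded inverse by the open mapping theorem, so $\id_{H^{s}_{\ell}(G)}$ factors through a compact operator and is compact; hence $H^{s}_{\ell}(G)$ is finite-dimensional. But $G$ is infinite, so $\CC[G]\subseteq H^{s}_{\ell}(G)$ is infinite-dimensional, a contradiction. (Alternatively, once $H^{s}_{\ell}(G)=H^{s+1}_{\ell}(G)$, the two complete norms are equivalent, and evaluating on $\delta_{g}$ with $\ell(g)\to\infty$ --- which exists by properness of $\ell$ on the infinite group $G$ --- yields the contradiction directly, bypassing the compactness statement.) The one genuinely delicate point is the identification $\overline{J_{s}(H^{\infty}_{\ell}(G))}=H^{s}_{\ell}(G)$ and the bookkeeping that upgrades the abstract isomorphism $J_{s}$ to an honest equality of function spaces; the remaining steps are routine.
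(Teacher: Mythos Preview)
Your argument is correct and follows the same strategy as the paper: apply Lemma~\ref{thm:inv-lim-operator} to the identity, then exploit the compactness of the embedding \(H^{s+1}_{\ell}(G)\hookrightarrow H^{s}_{\ell}(G)\) to force finite-dimensionality. The paper's version is slightly more direct: once one has \(I'\colon H^{s}_{\ell}(G)\to H^{\infty}_{\ell}(G)\) with \(I'J_{s}=\id\), one simply writes \(\id_{H^{\infty}_{\ell}(G)} = I'\circ J_{s+1,s}\circ J_{s+1}\), which is compact since the middle factor is; your detour through proving that \(J_{s}\) is bijective and that \(H^{s}_{\ell}(G)=H^{s+1}_{\ell}(G)\) as sets is sound but unnecessary.
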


\begin{proof}
  To the contrary, suppose that \(H^{\infty}_{\ell}(G)\) is a Banach
  space, and consider the identity operator \(I\colon
  H^{\infty}_{\ell}(G)\to H^{\infty}_{\ell}(G)\). By Lemma
  \ref{thm:inv-lim-operator}, it extends to an operator \(I'\colon
  H^{s}_{\ell}(G)\to H^{\infty}_{\ell}(G)\) for some \(s\). But then
  we have the following factorization of \(I\)
  \begin{equation}
    H^{\infty}_{\ell}(G) \xrightarrow{J_{s+1}} H^{s+1}_{\ell}(G)
    \xrightarrow{J_{s+1,s}} H^{s}_{\ell}(G) \xrightarrow{I'} H^{\infty}_{\ell}(G),
  \end{equation}
  from which it follows that \(I\) is compact. This is only possible
  when \(H^{\infty}_{\ell}(G)\) is finite-dimensional, i.e.\ when
  \(G\) is finite.
\end{proof}

\subsection{Decay of matrix coefficients}
\label{sec:decay-matr-coeff}

Let \(\pi\colon G\to {\mathcal{U}}(\mathcal{H})\) be a unitary
representation of \(G\). It induces a seminorm \(\norm{\cdot}_{\pi}\)
on \(\CC[G]\) given by
\begin{equation}
  \norm{f}_{\pi} = \norm{\pi(f)}_{op},
\end{equation}
where \(\pi(f)\) is obtained by linear extension of \(\pi\) to
\(\CC[G]\). We say that \(\pi\) is \emph{tempered} if for all
\(f\in\CC[G]\) we have \(\norm{f}_{\pi}\leq \norm{f}_{op}\).

\begin{definition}\label{def:decay-matrix-coeffs}
  The group \(G\) has property RD with respect to \(\ell\) if there
  exist constants \(C,s>0\) such that for every tempered
  representation \(\pi\colon G\to \mathcal{U}(\mathcal{H})\) of \(G\),
  and for all \(\xi,\eta\in \mathcal{H}\) the following estimate
  holds:
  \begin{equation}\label{eq:coeff-decay-estimate}
    \left( \sum_{x\in G} \frac{\abs{\langle \pi(x) \xi,
          \eta\rangle}^{2}}{(1+\ell(x))^{s}} \right)^{1/2} \leq C \norm{\xi}\norm{\eta}.
  \end{equation}
\end{definition}

\begin{proposition}
  Definition \ref{def:decay-matrix-coeffs} is implied by Definition \ref{def:h-s}, and it implies Definition \ref{def:norm-estimate}.
\end{proposition}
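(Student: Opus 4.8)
The statement consists of two implications, which I would prove separately; both reduce to Cauchy--Schwarz once set up.

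\textbf{Definition \ref{def:h-s} implies Definition \ref{def:decay-matrix-coeffs}.} Fix a tempered representation \(\pi\colon G\to\mathcal{U}(\mathcal{H})\) and vectors \(\xi,\eta\), and set \(c(x)=\langle\pi(x)\xi,\eta\rangle\). The one subtlety is that \(c\) need not be finitely supported, so it cannot be plugged directly into \(\norm{\cdot}_{\ell,s}\); I would instead work with finite truncations. Let \(s\) be the exponent supplied by Definition \ref{def:h-s}, so that \(\norm{g}_{op}\le C\norm{g}_{\ell,s}\) for all \(g\in\CC[G]\) and some \(C>0\). For finite \(F\subseteq G\), put \(f_{F}(x)=\overline{c(x)}(1+\ell(x))^{-s}\) for \(x\in F\) and \(f_{F}(x)=0\) otherwise. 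Then \(\langle\pi(f_{F})\xi,\eta\rangle=\sum_{x\in F}\abs{c(x)}^{2}(1+\ell(x))^{-s}\) is a nonnegative real number, and by Cauchy--Schwarz in \(\mathcal{H}\), temperedness of \(\pi\), and the bound from Definition \ref{def:h-s} it is at most \(\norm{\pi(f_{F})}_{op}\norm{\xi}\norm{\eta}\le\norm{f_{F}}_{op}\norm{\xi}\norm{\eta}\le C\norm{f_{F}}_{\ell,s}\norm{\xi}\norm{\eta}\). Since \(\norm{f_{F}}_{\ell,s}^{2}\) equals the same partial sum \(T_{F}:=\sum_{x\in F}\abs{c(x)}^{2}(1+\ell(x))^{-s}\), this gives \(T_{F}\le C\sqrt{T_{F}}\,\norm{\xi}\norm{\eta}\), hence \(T_{F}\le C^{2}\norm{\xi}^{2}\norm{\eta}^{2}\) uniformly in \(F\). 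Taking the supremum over all finite \(F\) yields \eqref{eq:coeff-decay-estimate} with the same constants \(C\) and \(s\).

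\textbf{Definition \ref{def:decay-matrix-coeffs} implies Definition \ref{def:norm-estimate}.} I would apply the decay estimate to the left regular representation \(\lambda\) on \(\ell^{2}(G)\), which is tempered since \(\norm{\lambda(f)}_{op}=\norm{f}_{op}\) by definition. Fix \(f\in\CC[G]\) and \(\xi,\eta\in\ell^{2}(G)\). Expanding \(\langle f*\xi,\eta\rangle=\sum_{x}f(x)\langle\lambda(x)\xi,\eta\rangle\) (a finite sum over \(\supp f\)) and applying Cauchy--Schwarz after inserting the weights \((1+\ell(x))^{s/2}(1+\ell(x))^{-s/2}\) gives \(\abs{\langle f*\xi,\eta\rangle}\le\norm{f}_{\ell,s}\bigl(\sum_{x}\abs{\langle\lambda(x)\xi,\eta\rangle}^{2}(1+\ell(x))^{-s}\bigr)^{1/2}\le C\norm{f}_{\ell,s}\norm{\xi}_{2}\norm{\eta}_{2}\). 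Since \(f\) is supported in \(\set{x:\ell(x)\le\ell(f)}\), one has \(\norm{f}_{\ell,s}\le(1+\ell(f))^{s/2}\norm{f}_{2}\). Taking suprema over unit \(\eta\) and then \(\xi\) yields \(\norm{f}_{op}\le C(1+\ell(f))^{s/2}\norm{f}_{2}\), and bounding \(C(1+t)^{s/2}\le C(1+t)^{\lceil s/2\rceil}=:P(t)\), a polynomial with positive coefficients (the estimate being valid for \(t\ge 0\)), gives Definition \ref{def:norm-estimate}.

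\textbf{The main point.} Neither implication is hard; the only genuine care needed is the non-finitely-supported matrix coefficient in the first part, which the truncation device above handles. As an alternative to truncation one can observe that \(g\mapsto\sum_{x}g(x)c(x)\) is a bounded linear functional of norm \(\le C\norm{\xi}\norm{\eta}\) on the Hilbert space \(H^{s}_{\ell}(G)\), whose Riesz representative is \(\overline{c}\,(1+\ell)^{-s}\), with \(\norm{\cdot}_{\ell,s}\)-norm precisely the left-hand side of \eqref{eq:coeff-decay-estimate}; but the truncation argument avoids discussing elements of the completion altogether. Together with the earlier propositions this closes the circle of equivalences among Definitions \ref{def:norm-estimate}--\ref{def:decay-matrix-coeffs}.
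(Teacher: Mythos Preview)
Your proof is correct. The second implication (Definition~\ref{def:decay-matrix-coeffs} \(\Rightarrow\) Definition~\ref{def:norm-estimate}) is essentially identical to the paper's: both expand \(\langle f*\xi,\eta\rangle\), apply Cauchy--Schwarz over \(\supp f\), and insert the weight \((1+\ell(f))^{s}/(1+\ell(x))^{s}\geq 1\) before invoking the decay estimate for \(\lambda\).

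For the first implication (Definition~\ref{def:h-s} \(\Rightarrow\) Definition~\ref{def:decay-matrix-coeffs}) the routes differ. The paper argues by duality: the matrix coefficient \(\phi\) extends, via temperedness, to a bounded functional \(\tilde\phi\) on \(C^{*}_{r}(G)\) of norm at most \(\norm{\xi}\norm{\eta}\); composing with the adjoint \(J^{*}\colon (C^{*}_{r}(G))^{*}\to (H^{s}_{\ell}(G))^{*}\cong H^{-s}_{\ell}(G)\) recovers \(\phi\) itself and yields \(\norm{\phi}_{\ell,-s}\leq \norm{J}\norm{\xi}\norm{\eta}\). Your truncation argument is more elementary and avoids identifying the dual of \(H^{s}_{\ell}(G)\) or passing through \(C^{*}_{r}(G)^{*}\): it plugs the specific test function \(f_{F}=\overline{c}\,(1+\ell)^{-s}\one_{F}\) into the chain \(\norm{\pi(\cdot)}\leq\norm{\cdot}_{op}\leq C\norm{\cdot}_{\ell,s}\) and exploits the self-referential identity \(\langle\pi(f_{F})\xi,\eta\rangle=\norm{f_{F}}_{\ell,s}^{2}=T_{F}\). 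The Riesz alternative you sketch at the end is the paper's argument in compressed form (though the paper routes the functional through \(C^{*}_{r}(G)\) first, which is where temperedness enters for them). Both approaches give the same constants; yours is arguably cleaner for a lecture-notes setting, while the paper's makes the role of the embedding \(J\) and its adjoint more transparent.
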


\begin{proof}
  First, assume that \(G\) has property RD with respect to a length
  \(\ell\) in the sense of Definition~\ref{def:decay-matrix-coeffs}, with constants \(C,s>0\). If we take \(f\in\CC[G]\),
  and \(g,h\in\ell^{2}(G)\), we may compute
  \begin{equation}
    \begin{split}
      \abs{\langle f*g, h\rangle} & = \abs{ \sum_{x\in G}\sum_{y\in
          G}f(y)g(y^{-1}x)\overline{h(x)}} = \abs{\sum_{\ell({y})\leq
          \ell(f)} f(y) \langle \lambda(y)g,h\rangle} \leq \\
      & \leq \norm{f}_{2}
      \left(\sum_{{\ell(y)}\leq\ell(f)}\abs{\langle
          \lambda(y)g,h\rangle}^{2}\right)^{1/2} \leq \\
      & \leq \norm{f}_{2}
      \left(\sum_{{\ell(y)}\leq\ell(f)}\abs{\langle
          \lambda(y)g,h\rangle}^{2}\frac{(1+\ell(f))^{s}}{(1+\ell(y))^{s}}\right)^{1/2}
      \leq \\
      & \leq \norm{f}_{2}(1+\ell(f))^{s/2}C\norm{g}_{2}\norm{h}_{2},
    \end{split}
  \end{equation}
  obtaining an estimate for the operator norm of \(f\).

  For the second implication, assume that Definition~\ref{def:h-s} is
  satisfied, and for some \(s\) there is a bounded operator \(J\colon
  H^{s}_{\ell}(G)\to C^{*}_{r}(G)\) extending the inclusion of
  \(\CC[G]\). Take a tempered representation \(\pi\) of \(G\) on a
  Hilbert space \(\mathcal{H}\), and \(\xi,\eta\in \mathcal{H}\), and
  denote the corresponding matrix coefficient by \(\phi\),
  \begin{equation}
    \phi(x)=\langle \pi(x)\xi,\eta\rangle.
  \end{equation}
  As a bounded function, \(\phi\) is a functional on
  \(\ell^{1}(G)\subseteq C_{r}^{*}(G)\), and we claim that it
  extends to a bounded functional \(\tilde{\phi}\)  on \(C^{*}_{r}(G)\) of norm at most
  \(\norm{\xi}\norm{\eta}\). Indeed, take \(f\in \ell^{1}(G)\),
  and observe that
  \begin{equation}
    \begin{split}
      \abs{\sum_{x\in G}f(x)\phi(x)} & = \abs{\sum_{x\in
          G}f(x)\langle\pi(x)\xi,\eta\rangle} = \abs{\langle
        \pi(f)\xi,\eta \rangle}\leq \\
      & \leq \norm{\pi(f)}_{op} \norm{\xi}\norm{\eta} \leq \norm{f}_{op} \norm{\xi}\norm{\eta}.
    \end{split}
  \end{equation}
  Now, consider the adjoint map \(J^{*}\colon (C^{*}_{r}(G))^{*}\to
  (H^{s}_{\ell}(G))^{*}\). It is easy to see that under the pairing
  \((f,g)\to \sum_{x\in G}f(x)g(x)\), the dual space of
  \(H_{\ell}^{s}(G)\) is \(H_{\ell}^{-s}(G)\), and since all the maps
  involved commute with the canonical inclusions of \(\CC[G]\) into
  suitable spaces, as a function, \(J^{*}\tilde{\phi}\) is just
  \(\phi\). We therefore obtain the estimate
  \begin{equation}
    \norm{\phi}_{\ell,-s} \leq \norm{J^{*}}\norm{\tilde{\phi}} \leq \norm{J^{*}}\norm{\xi}{\norm{\eta}},
  \end{equation}
  which is exactly the estimate \eqref{eq:coeff-decay-estimate} in
  Definition \ref{def:decay-matrix-coeffs}.
\end{proof}

\section{Groups with property RD}
\label{sec:groups-with-property}

Now we will say something more about the class of groups with property
RD. We will describe its closure properties, give a geometric
criterion for proving RD, and some further examples of groups with
property RD.

\subsection{Permanence properties}
\label{sec:perm-prop}

The following permanence properties were established by P. Jolissaint
in \cite{Jolissaint1990}

\begin{theorem}\label{thm:permanence}
Suppose that \(G\) and \(H\) are groups with property RD. Then
\begin{enumerate}
\item if \(K\leq G\), then \(K\) has property RD,
\item if \(G \leq K\), and \([K:G]<\infty\), then \(K\) has property
  RD,
\item if \( A \) is finite, then the amalgamated free product
  \(G*_{A}H\) has property RD,
\item if \(A\) is a group with property RD, which embeds as a finite
  index central subgroup of both \(G\) and \(H\), then \(G*_{A}H\) has
  property RD,
\item if \(1\to N\to E\to Q\to 1\) is a short exact sequence of
  groups, such that \(N\) and \(Q\) have property RD, and moreover
  \(N\) is polynomially distorted in \(E\), i.e.\ for \(n\in N\)
  \begin{equation}
    \ell_{N}(n)\leq P(\ell_{E}(n))
  \end{equation}
  for some polynomial \(P\),
  then \(E\) has property RD.
\end{enumerate}
\end{theorem}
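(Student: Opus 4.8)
The plan is to establish each of the five permanence properties mostly using Definition~\ref{def:norm-estimate}, choosing for each group the word-length with respect to a conveniently chosen generating set, and then invoking the remark that property RD is independent of the choice of generating set. I would handle the five parts in the order given, since the later ones build intuition from the earlier ones.

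For part (1), given a finitely supported $f\in\CC[K]$, I would regard it as an element of $\CC[G]$; the subtlety is that the word-length $\ell_K$ with respect to generators of $K$ is typically much larger than $\ell_G$ restricted to $K$. But the RD estimate for $G$ involves only $\ell_G$, and $\ell_G\le C\ell_K$ on $K$ (generators of $K$ have bounded $G$-length), so $P(\ell_G(f))\le Q(\ell_K(f))$; it remains to check that the operator norm of $f$ on $\ell^2(G)$ controls (indeed equals) its operator norm on $\ell^2(K)$, which follows because $\ell^2(G)$ decomposes as an orthogonal sum of copies of $\ell^2(K)$ indexed by right cosets, each invariant under left convolution by $\CC[K]$. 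For part (2), the reverse inclusion: here one wants to transfer RD from the subgroup $G$ to the finite extension $K$. I would pick a set of coset representatives, write $f\in\CC[K]$ as a sum $\sum_i f_i g_i$ with $f_i\in\CC[G]$ supported near the identity in a controlled way, use that $\lambda_K$ restricted to $\lambda_G(\CC[G])$ is a direct sum of finitely many copies of $\lambda_G$ up to unitary equivalence, and bound $\norm{f}_{op}\le\sum_i\norm{f_i}_{op}$, controlling the number of terms by $[K:G]$ and the lengths by the quasi-isometry between $G$ and $K$.

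Parts (3) and (4) are the geometric heart of the argument and where I expect the main obstacle to lie. For an amalgam $G*_A H$ with $A$ finite, every element has a normal form $x=a_0 c_1 c_2\cdots c_n$ alternating between coset representatives of $A$ in $G$ and in $H$, and the combinatorial length $n$ is quasi-isometric to the word-length. The key is a ``spherical decomposition'' trick: given $f\in\CC[G*_A H]$, split it as $f=\sum_n f_n$ where $f_n$ is supported on elements of combinatorial length exactly $n$, prove a bound of the form $\norm{f_m * \xi}_2 \aless (m+1)\norm{f_m}_2\norm{\xi}_2$ (the linear factor coming from the number of ways a product of combinatorial length $k$ can be written as (length $m$)$\cdot$(length $\ell$) with $|m-\ell|\le k\le m+\ell$), and then sum over $n$ using Cauchy--Schwarz exactly as in the proof that Definitions~\ref{def:norm-estimate} and~\ref{def:h-s} are equivalent. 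The estimate $\norm{f_m*\xi}_2\aless (m+1)\norm{f_m}_2\norm{\xi}_2$ itself reduces, after grouping by the initial and final syllables, to the RD estimates for $G$ and $H$ applied to the ``syllable pieces,'' together with finiteness of $A$ to keep the bookkeeping polynomial. Part (4) runs along the same lines but with $A$ central of finite index: instead of finiteness of $A$ one uses RD of $A$ together with the finite-index hypothesis to control the syllable counts, again reducing to a triangle-inequality-plus-Cauchy--Schwarz argument over combinatorial spheres.

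For part (5), given the extension $1\to N\to E\to Q\to 1$ with $N$ polynomially distorted, I would fix generating sets so that $\ell_E$ restricted to $N$ is comparable to a polynomial in $\ell_N$, and so that $\ell_E$ pushes down to something comparable to $\ell_Q$ on the quotient. For $f\in\CC[E]$, decompose $\ell^2(E)$ as a direct integral / orthogonal decomposition over the fibers (cosets of $N$), or more concretely write a matrix-coefficient estimate: first apply the RD estimate for $Q$ ``in the base'' to reduce to estimating, uniformly, operator norms of the restrictions of $f$ to single $N$-cosets, then apply the RD estimate for $N$ ``in each fiber,'' and finally absorb the distortion polynomial into the overall polynomial bound. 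The main obstacle here is keeping the two polynomial estimates from interacting badly — one must check that the support radius of the fiberwise pieces in the $\ell_N$-metric is still polynomially bounded in $\ell_E(f)$, which is exactly where the polynomial distortion hypothesis is used, and that the number of relevant cosets is polynomially bounded in $\ell_E(f)$, which uses properness together with the RD (hence polynomial-growth-on-amenable-pieces, but more precisely just the support-radius) control in $Q$.
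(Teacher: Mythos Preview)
The paper does not give its own proof of Theorem~\ref{thm:permanence}: it simply attributes all five items to Jolissaint~\cite{Jolissaint1990} and moves on. So there is no in-paper argument to compare your proposal against; what you have written is a sketch of what a proof \emph{might} look like, and it should be measured against Jolissaint's original arguments rather than anything in these notes.

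That said, a brief assessment of your outline: parts~(1) and~(2) are essentially correct and standard, with the coset decomposition of \(\ell^{2}(G)\) being exactly the right observation. Your sketch for~(5) is in the spirit of Jolissaint's argument, though the last sentence is off: one does \emph{not} bound ``the number of relevant cosets'' polynomially --- \(Q\) need not have polynomial growth --- rather, one applies the RD inequality for \(Q\) to the function \(q\mapsto \norm{f|_{Nq}}_{2}\) (or a variant thereof) to absorb the base direction, and separately applies RD for \(N\) inside each fibre, the distortion hypothesis ensuring that \(\ell_{N}\)-radii of the fibre pieces stay polynomially controlled by \(\ell_{E}(f)\). Parts~(3) and~(4) are where your sketch is thinnest: the statement ``\(\norm{f_{m}*\xi}_{2}\lesssim (m+1)\norm{f_{m}}_{2}\norm{\xi}_{2}\)'' is not itself the key inequality. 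What Jolissaint (following Haagerup's argument for free groups) actually proves is an estimate of the form \(\norm{(f_{m}*g_{n})|_{S_{k}}}_{2}\le C\norm{f_{m}}_{2}\norm{g_{n}}_{2}\) for functions supported on syllable-spheres, with the polynomial factor emerging only after summing over the admissible triples \((m,n,k)\). Reducing this sphere-by-sphere estimate to the RD inequalities for \(G\) and \(H\) (and, in~(4), for \(A\)) is the substantive step, and your phrase ``grouping by the initial and final syllables'' does not yet capture the mechanism. If you intend to supply a full proof rather than cite~\cite{Jolissaint1990}, that is the part needing the most elaboration.
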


Recently, the following strengthening of condition (5) in
Theorem~\ref{thm:permanence}, omitting the assumption of polynomial distortion, was proved in \cite{Garncarek2013a}
(actually, in the context of topological compactly generated groups).

\begin{theorem}\label{thm:extensions}
  Suppose that \(1\to N\to G\to Q\to 1\) is a short exact sequence of
  finitely generated groups, and that
  \begin{enumerate}
  \item \(Q\) has property RD,
  \item \(N\) has property RD with respect to the restriction of \(\ell_{G}\).
  \end{enumerate}
  Then \(G\) has property RD.
\end{theorem}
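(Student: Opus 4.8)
The plan is to verify the operator-norm estimate of Definition~\ref{def:norm-estimate} for $G$, using the word-length $\ell_G$, by decomposing an arbitrary $f \in \CC[G]$ according to the fibers of the quotient map $p\colon G \to Q$. Fix a set-theoretic section $\sigma\colon Q \to G$ of $p$ (chosen so that $\ell_G(\sigma(q))$ is comparable to $\ell_Q(q)$, which is possible because $Q$ is finitely generated). For $g \in G$ write $g = \sigma(p(g))\,n_g$ with $n_g \in N$. Given $f$, for each $q \in Q$ let $f_q \in \CC[G]$ be the part of $f$ supported on the coset $\sigma(q)N$; then $f = \sum_q f_q$, the supports are disjoint, and $\norm{f}_2^2 = \sum_q \norm{f_q}_2^2$. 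Each $f_q$ can be written as $\lambda(\sigma(q))\,\check f_q$ where $\check f_q \in \CC[N]$, so $\norm{f_q}_{op} = \norm{\check f_q}_{op,\,\ell^2(N)}$, and the latter is controlled by property RD of $N$ with respect to $\ell_G\res N$.

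The key step is to bound $\norm{f}_{op}$ by testing against $\xi \in \ell^2(G)$ and organizing the sum $f * \xi = \sum_q f_q * \xi$ so that the contributions from different $q$ are controlled simultaneously. Decompose $\ell^2(G) = \bigoplus_{q' \in Q} \ell^2(\sigma(q')N)$; the operator $\lambda(f_q)$ sends the $q'$-block into the $q q'$-block, and on each block it acts (after the obvious unitary identifications with $\ell^2(N)$) as convolution by $\check f_q$. Thus if we set $\xi_{q'} = \xi\res_{\sigma(q')N}$, identified with a vector in $\ell^2(N)$, we get $\norm{f*\xi}_2^2 = \sum_{q''} \bigl\lVert \sum_{q q' = q''} \check f_q * \xi_{q'} \bigr\rVert_{\ell^2(N)}^2$. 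Now estimate the inner norm by the triangle inequality and Cauchy--Schwarz against a polynomial weight: $\bigl\lVert \sum_{qq'=q''} \check f_q * \xi_{q'}\bigr\rVert \le \sum_{qq'=q''} \norm{\check f_q}_{op}\norm{\xi_{q'}}$, and since each nonzero $\check f_q$ has $\ell_Q(q) \le C\ell_G(f)$ (the coset meets the support of $f$), the number of relevant $q$ with $qq'=q''$ is at most the $Q$-ball count, which is polynomial once we also invoke property RD of $Q$ — more precisely, one inserts weights $(1+\ell_Q(q))^{-s_Q}$ to apply the matrix-coefficient form (Definition~\ref{def:decay-matrix-coeffs}) of RD for $Q$ to the "profile" function $q \mapsto \norm{\check f_q}_{op}$ paired with $q' \mapsto \norm{\xi_{q'}}$. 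Combining the two polynomial losses — one of degree $\sim s_N$ from $\norm{\check f_q}_{op} \le P_N(\ell_G(f))\norm{\check f_q}_2$, one of degree $\sim s_Q$ from the $Q$-side — yields $\norm{f*\xi}_2 \le P(\ell_G(f))\,\norm{f}_2\,\norm{\xi}_2$ with $P$ a polynomial, which is exactly Definition~\ref{def:norm-estimate} for $G$.

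The main obstacle is handling the interaction between the two RD estimates cleanly: property RD of $N$ gives a bound $\norm{\check f_q}_{op} \le P_N(\ell_G(f))\norm{\check f_q}_2$ where the polynomial degree depends on the constant $s_N$ from $N$'s RD, while property RD of $Q$ enters as a decay estimate for the regular representation of $Q$ (applied to the vectors $q \mapsto \norm{\check f_q}_{op}/P_N(\ell_G(f))$ and $q' \mapsto \norm{\xi_{q'}}$ in $\ell^2(Q)$, using that convolution by a function supported in a $Q$-ball of radius $\aless \ell_G(f)$ has the right norm bound). Making the supports, the section, and the weights match up so that the $Q$-convolution is literally applied to the right $\ell^2(Q)$ vectors — in particular that the "matrix" $(q'',q') \mapsto \norm{\check f_{q''(q')^{-1}}}_{op}$ really is convolution by a single $\ell^2(Q)$ function of small support, namely $q \mapsto \norm{\check f_q}_{op}$ — is the delicate bookkeeping, and is precisely where the hypothesis that $N$ has RD \emph{with respect to $\ell_G\res N$} (rather than its own word length) is used, since otherwise the polynomial $P_N$ would be in $\ell_N$ and uncontrollable in terms of $\ell_G(f)$.
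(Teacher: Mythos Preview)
The paper does not include a proof of this theorem; it is stated with a citation to \cite{Garncarek2013a} and followed only by a remark explaining why the hypothesis on $N$ must use $\ell_G\res_N$ rather than $\ell_N$. So there is no ``paper's own proof'' to compare against. Your outline is, however, essentially the standard argument from that reference, and its overall strategy is sound: decompose $f$ over cosets of $N$, use RD of $N$ (with respect to $\ell_G\res_N$) to replace each $\norm{\check f_q}_{op,\ell^2(N)}$ by $P_N(\ell_G(f))\norm{\check f_q}_2$, package the coset norms as functions $F,\Xi\in\ell^2(Q)$, and then use RD of $Q$ on the convolution $F*\Xi$.

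Two points in your write-up should be tightened. First, the block action of $\lambda(f_q)$ from $\ell^2(\sigma(q')N)$ to $\ell^2(\sigma(qq')N)$ is not literally convolution by $\check f_q$ on $\ell^2(N)$: under the obvious identifications it becomes convolution by the $\sigma(q')$-conjugate of $\check f_q$, composed with a translation coming from the cocycle $\sigma(q)\sigma(q')\sigma(qq')^{-1}\in N$. Since conjugation by an automorphism of $N$ and translation are both unitary and intertwine convolution operators, the \emph{operator norm} is still $\norm{\check f_q}_{op,\ell^2(N)}$, so your estimate survives, but the statement as written is not correct. Second, the sentence ``the number of relevant $q$ with $qq'=q''$ is at most the $Q$-ball count, which is polynomial once we also invoke property RD of $Q$'' is wrong as stated: $Q$ need not have polynomial growth, and no cardinality bound is used. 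What you actually need---and what your ``more precisely'' clause is reaching for---is simply Definition~\ref{def:norm-estimate} applied to $Q$: the function $F(q)=\norm{\check f_q}_{op,\ell^2(N)}$ lies in $\RR_+[Q]$ with $\ell_Q(F)\leq \ell_G(f)$, hence $\norm{F*\Xi}_{\ell^2(Q)}\leq P_Q(\ell_G(f))\norm{F}_2\norm{\Xi}_2$. There is no need for weights or for the matrix-coefficient form of RD here; the operator-norm definition for $Q$ does the job directly. With these two fixes the chain of inequalities closes exactly as you describe.
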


Note, that without assumptions on the distortion of \(N\), we cannot
use the original length of \(N\), as the following example
shows. Namely, consider the extensions of the form \(1\to\ZZ\to
G\to\ZZ\to 1\). The group \(\ZZ\) has property RD, and \(G\) is
solvable, hence amenable. However, it can have exponential growth, in
which case it does not satisfy property RD. In this case,
Theorem~\ref{thm:extensions} implies that \(\ZZ\) does not satisfy
property RD with respect to the restriction of the length of \(G\).

\subsection{The centroid property}
\label{sec:centroid-property}

In the remainder of this section, we will follow the article
\cite{Sapir2015} by Mark Sapir, which gives a nice common
generalization of previously known geometric approaches to property
RD.

Let \(G\) act by isometries on a metric space \((X,d)\), with
uniformly bounded stabilizers. A \emph{centroid map} is a map
\(m\colon G^{2}\to X\) such that for some polynomial \(P\) the
following conditions are met:
\begin{enumerate}
\item \(\abs{m(B_{G}(r)\times \{h\})} \leq P(r)\) for all \(h\in G\),
\item \(\abs{m(\{g\}\times G)} \leq P(r)\) for all \(g\in B_{G}(r)\),
\item \(\abs{\set{ g^{-1}m(g,gh) : g\in B_{G}(r) }} \leq P(r)\) for
  all \(h\in G\).
\end{enumerate}
We will make an attempt of explaining the meaning of these conditions
later on, before formulating the relative centroid property, and for
now let us just accept them as three necessary ingredients in the
proof of the following result.

\begin{theorem}\label{prop:centroid-rd}
  If \(G\) admits a centroid map, then it has property RD.
\end{theorem}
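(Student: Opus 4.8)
The plan is to use the matrix-coefficient formulation of property RD (Definition~\ref{def:decay-matrix-coeffs}), since it reduces the problem to a single estimate on sums of matrix coefficients over spheres, which is well suited to a counting/geometric argument. So fix a tempered representation \(\pi\colon G\to\mathcal{U}(\mathcal{H})\) and vectors \(\xi,\eta\in\mathcal{H}\). It suffices to bound, for each \(r\), the quantity \(\sum_{\ell_G(x)\in[r,r+1)}\abs{\langle\pi(x)\xi,\eta\rangle}^2\) by a polynomial in \(r\) times \(\norm\xi^2\norm\eta^2\); summing against \((1+r)^{-s}\) for \(s\) large then gives \eqref{eq:coeff-decay-estimate}. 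Equivalently, writing \(S_r=\set{x\in G:\ell_G(x)\in[r,r+1)}\), I want to find a decomposition of the indicator sum that lets temperedness of \(\pi\) kick in.

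The key idea is to use the centroid map \(m\colon G^2\to X\) to organize the sphere \(S_r\). First I would fix a basepoint and pass to the length function \(\ell_p(x)=d(p,xp)\) coming from the isometric action; since \(G\) acts with uniformly bounded stabilizers and (implicitly) \(\ell_p\) is comparable to \(\ell_G\) up to the polynomial domination that the centroid conditions force, it is enough to prove the estimate with respect to \(\ell_p\). The heart of the argument is the following averaging trick: for \(x\in S_r\), consider the "midpoint" data \(m(1,x)\in X\) together with \(x\). Condition~(1) with \(h=x\) and \(g\) ranging over \(S_r\) bounds the number of possible values of \(m(1,x)\) by \(P(r)\); so I can split \(S_r=\bigsqcup_c S_r^c\) where \(S_r^c=\set{x\in S_r : m(1,x)=c}\) and there are at most \(P(r)\) nonempty pieces. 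Then by Cauchy--Schwarz on the outer sum over the \(\le P(r)\) classes,
\[
  \sum_{x\in S_r}\abs{\langle\pi(x)\xi,\eta\rangle}^2
  \le P(r)\max_c\sum_{x\in S_r^c}\abs{\langle\pi(x)\xi,\eta\rangle}^2,
\]
so it remains to handle a single class \(S_r^c\). On such a class one factors \(\pi(x)=\pi(a_x)\pi(b_x)\) through the centroid: roughly \(a_x\) moves \(p\) toward \(c\) and \(b_x=a_x^{-1}x\) moves on from there, and conditions~(2) and~(3) are exactly what guarantee that the pieces \(a_x\) (resp.\ \(b_x\)) range over a set of polynomial size and that the decomposition is "injective enough". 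Writing \(\langle\pi(x)\xi,\eta\rangle=\langle\pi(b_x)\xi,\pi(a_x)^{*}\eta\rangle\) and applying Cauchy--Schwarz in the other variable, one is reduced to sums of the form \(\sum_b\abs{\langle\pi(b)\xi,\eta'\rangle}^2\) over a polynomially-bounded set, where temperedness and \(\norm{\delta_E}_{op}=1\) give each such sum a bound \(\aless \norm\xi^2\norm{\eta'}^2\); the cross term \(\sum_a\norm{\pi(a_x)^*\eta}^2=(\#\text{values})\norm\eta^2\) contributes another polynomial factor.

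More precisely, I expect the clean way to run the final step is: enumerate the class \(S_r^c\) by the pair \((a,b)\) with \(a\in A_r:=\set{g^{-1}m(g,x)^{-1}\cdots}\) — i.e.\ use condition~(3) which says the set of "normalized centroids" \(g^{-1}m(g,gh)\) has size \(\le P(r)\) — to see that \(x\mapsto(a_x,b_x)\) is at most boundedly-to-one (here uniform boundedness of stabilizers is used to upgrade "same \((a_x,b_x)\)" to "boundedly many \(x\)"). Then
\[
  \sum_{x\in S_r^c}\abs{\langle\pi(a_x)\pi(b_x)\xi,\eta\rangle}^2
  \aless \sum_{a\in A_r}\sum_{b\in B_r}\abs{\langle\pi(b)\xi,\pi(a)^{*}\eta\rangle}^2
  \le \sum_{a\in A_r}\norm{\delta_{B_r}}_{op}^2\,\norm\xi^2\,\norm{\pi(a)^{*}\eta}^2
  \le \abs{A_r}\,\abs{B_r}\,\norm\xi^2\,\norm\eta^2,
\]
using temperedness to bound \(\norm{\delta_{B_r}}_{op}\le\norm{\delta_{B_r}}_{op,\lambda}\le\abs{B_r}^{1/2}\) — wait, this last bound uses only that the regular representation of the indicator of a set of size \(N\) has operator norm \(\le\sqrt N\), which is Cauchy--Schwarz as in the proof of Proposition~\ref{thm:poly-growth-rd}. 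Since \(\abs{A_r},\abs{B_r}\le P(r)\) and there are \(\le P(r)\) classes \(c\) and a bounded-to-one loss from stabilizers, we get \(\sum_{x\in S_r}\abs{\langle\pi(x)\xi,\eta\rangle}^2 \aless P(r)^3\norm\xi^2\norm\eta^2\), which is all we need.

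The main obstacle, and the part requiring genuine care, is setting up the factorization \(x=a_xb_x\) so that simultaneously (i) the relevant index sets have polynomial cardinality — this is where all three centroid conditions get used, matching (1)/(2)/(3) to the three "slots" \(c=m(1,x)\), \(b_x\) (the second-coordinate orbit), and \(a_x\) (the normalized centroid) respectively — and (ii) the map \(x\mapsto(c,a_x,b_x)\) is boundedly-to-one, which is exactly where the hypothesis of uniformly bounded stabilizers is essential (two group elements with the same centroid data differ by something fixing a point of \(X\)). A secondary technical point is justifying that one may work with \(\ell_p\) rather than \(\ell_G\): one needs \(\ell_G\) to dominate \(\ell_p\) and vice versa up to a polynomial, and the "vice versa" direction — that \(\ell_p\) is proper-ish and polynomially large — follows from the centroid conditions themselves (condition~(1) forces the balls to grow polynomially in an appropriate sense). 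Once the bookkeeping of these index sets is pinned down, the analytic content is just two applications of Cauchy--Schwarz plus temperedness, exactly as in the earlier propositions.
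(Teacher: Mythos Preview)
Your approach has a fundamental gap. You propose to factor each $x\in S_r$ as $x=a_xb_x$ with $a_x\in A_r$, $b_x\in B_r$, both index sets of polynomial size in $r$, and with $x\mapsto(c,a_x,b_x)$ boundedly (say $K$-)to-one. But then $\abs{S_r}\le K\cdot P(r)\cdot\abs{A_r}\cdot\abs{B_r}$ is polynomial in $r$, which is false for any group of exponential growth---in particular for any non-elementary hyperbolic group, precisely one of the main classes the centroid property is meant to cover. So no such factorization can exist. Your very first step already signals the trouble: condition~(1) bounds $\{m(g,h):g\in B_G(r)\}$ for \emph{fixed} $h$, whereas you want $\{m(1,x):x\in S_r\}$, where the first argument is fixed and the second varies. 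That set is controlled by condition~(2), which with $g=1$ gives the bound $P(0)$, a constant; so your partition of $S_r$ into classes $S_r^c$ has only boundedly many pieces, each still exponentially large. (There is also a minor slip: $\norm{\delta_E}_{op,\lambda}\ge\norm{\delta_E}_2=\abs{E}^{1/2}$, not $\le$; the Cauchy--Schwarz argument of Proposition~\ref{thm:poly-growth-rd} only yields $\norm{\delta_E}_{op}\le\abs{E}$.)

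The point you are missing is that the centroid map organizes \emph{pairs} of group elements, not single ones. The paper works directly with Definition~\ref{def:norm-estimate}: for $f$ supported in $B_G(r)$ and $\xi\in\ell^2(G)$ one expands $\norm{f*\xi}_2^2=\sum_k\bigl|\sum_g f(g)\xi(g^{-1}k)\bigr|^2$ and, for each fixed output $k$, partitions the inner sum over $g\in B_G(r)$ by the value of $m(g,k)$. Condition~(1) bounds the number of parts by $P(r)$; Cauchy--Schwarz then produces $\sum_{g,h}c_{gh}\abs{f(g)}^2\abs{\xi(h)}^2$ where $c_{gh}$ counts the $k$ with $\ell(kh^{-1})\le r$ and $m(g,k)=m(kh^{-1},k)$; conditions~(2),~(3) together with bounded stabilizers give $c_{gh}\le KP(r)^2$. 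No sphere is ever asserted to be small. Each centroid condition fixes \emph{two} of the three triangle vertices and lets the third vary, and they are applied exactly that way; your attempt to use them with only one vertex fixed and the opposite one running over a whole sphere is what breaks the argument.
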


\begin{proof} 
  Take \(f\in\CC[G]\) with \(\ell(f)=r\) and \(\xi\in \ell^{2}(G)\), and estimate the
  norm of the convolution \(f*\xi\) as follows.
  \begin{equation}
    \begin{split}
      \norm{f*\xi}_{2}^{2} & = \sum_{k\in G} \abs{\sum_{g\in G}
        f(g)\xi(g^{-1}k)}^{2} = \sum_{k\in G} \abs{ \sum_{x\in
          X} \sum_{\substack{\ell(g)\leq r\\ m(g,k)=x}}
        f(g)\xi(g^{-1}k)}^{2} \leq \\
      & \leq \sum_{k\in G} \abs{m(B(r)\times\{k\})}
      \sum_{x\in m(B(r)\times\{k\})} \abs{
        \sum_{\substack{\ell(g)\leq r\\ m(g,k)=x}}
        f(g)\xi(g^{-1}k)}^{2} \leq \\
      & \leq P(r)\sum_{\substack{k\in G\\x\in m(B(r)\times\{k\})}}
      \sum_{\substack{\ell(g)\leq r\\ m(g,k)=x}} \abs{f(g)}^{2} \sum_{\substack{\ell(g)\leq r\\ m(g,k)=x}}
 \abs{\xi(g^{-1}k)}^{2}.
    \end{split}
  \end{equation}
  In the last sum we substitute \(h=g^{-1}k\), i.e.\ \(g=kh^{-1}\),
  obtaining
  \begin{equation}
    \begin{split}
      \norm{f*\xi}_{2}^{2} & \leq P(r)\sum_{\substack{k\in G\\x\in m(B(r)\times\{k\})}}
      \sum_{\substack{\ell(g)\leq r\\ m(g,k)=x}} \abs{f(g)}^{2} \sum_{\substack{\ell(kh^{-1})\leq r\\ m(kh^{-1},k)=x}}
 \abs{\xi(h)}^{2} \leq \\
 & \leq P(r)\sum_{\substack{g,h\in G\\\ell(g)\leq r}} c_{gh} \abs{f(g)}^{2}\abs{\xi(h)}^{2},
    \end{split}
  \end{equation}
  where
  \begin{equation}
    c_{gh} = \abs{ \set{ (k,x) \in
     G\times X : \ell(kh^{-1})\leq r, x=m(g,k)=m(kh^{-1},k) }}.
  \end{equation}
  Now fix \(g\) and \(h\). If the pair \((k,x)\) counts in \(c_{gh}\), then \(x\in
  m(\{g\}\times G)\), and since \(\ell(g) \leq r\), there are at most
  \(P(r)\) possibilities for \(x\). We also have
  \begin{equation}
    hk^{-1}x \in \{ hk^{-1}m(kh^{-1},kh^{-1}h) : \ell(kh^{-1})\leq r\},
  \end{equation}
  so there are \(P(r)\) possibilities for \(hk^{-1}x\). If \(K\) is
  the bound on the stabilizers of the action of \(G\) on \(X\), then
  for every \(x\) we have at most \(KP(r)\) possibilities for
  \(hk^{-1}\), or for \(k\), because \(h\) is fixed. In other words,
  \begin{equation}
    c_{gh} \leq KP(r)^{2},
  \end{equation}
  and
  \begin{equation}
    \norm{f*\xi}_{2}^{2}\leq KP(r)^{3} \norm{f}_{2}^{2} \norm{\xi}_{2}^{2},
  \end{equation}
  yielding property RD for \(G\).
\end{proof}

During the mini-course a question was asked, whether the assumption of
uniformly bounded stabilizers can be weakened. As we see in the above
proof, the only place where it is used is the estimation of the number
of possibilities for \(hk^{-1}\in G\) when knowing that the number of possible
values of \(hk^{-1}x\in X\) for a fixed \(x\in m(\{g\}\times
G)\subseteq m(B(r)\times G)\) is at most
\(P(r)\). This number is bounded by \(\abs{\Stab_{G}(x)}P(r)\), and so in order to
retain a polynomial estimate in \(r\), we need to know that there
exists a polynomial \(Q\) such that
\begin{equation}
  \abs{\Stab_{G}(x)} \leq Q(r)
\end{equation}
for all \(x\in m(B(r)\times G)\). We can call this condition
\emph{polynomially bounded stabilizers (with respect to \(m\))}. If it holds, the rest of the
proof works with \(K\) replaced by \(Q(r)\). Although this condition
is weaker than uniformly bounded stabilizers, it looks more
technical. It is interesting if there exists an example where a
centroid map can be constructed for an action with polynomially
bounded stabilizers, while it is more difficult to get a centroid map
for an action with uniformly bounded stabilizers.

\subsection{Examples of groups with the centroid property}
\label{sec:examples-groups-with}

Here we will give examples of two classes of groups which have
property RD thanks to satisfying the centroid property.

\begin{proposition}\label{prop:hyp-rd}
  Gromov hyperbolic groups have property RD.
\end{proposition}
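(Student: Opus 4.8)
The plan is to prove that a Gromov hyperbolic group $G$ admits a centroid map $m\colon G^2\to X$ for a suitable metric space $X$, and then invoke Theorem~\ref{prop:centroid-rd}. The natural choice is to take $X=G$ itself with the word metric coming from a finite symmetric generating set $S$, and to let $G$ act on itself by left translations; the stabilizers of this action are trivial, so the uniformly-bounded-stabilizer hypothesis is automatic. For the centroid map, the idea is to use the coarse geometry of geodesics in $\delta$-hyperbolic spaces: given $g,k\in G$, fix once and for all a geodesic $[1,g]$ from the identity to $g$ and a geodesic $[1,k]$ from the identity to $k$, and define $m(g,k)$ to be a point on $[1,k]$ at distance roughly $(g\,|\,k)_1$ from the identity, where $(g\,|\,k)_1$ is the Gromov product. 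Morally $m(g,k)$ is the ``point where the geodesics $[1,g]$ and $[1,k]$ split apart''. One has to make an explicit deterministic choice (e.g. take the vertex on a fixed geodesic $[1,k]$ that is nearest to the integer part of the Gromov product) so that $m$ is genuinely a function.

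The key steps are then the verifications of conditions (1)--(3) in the definition of a centroid map, and these all boil down to the same hyperbolic-geometry fact: \emph{on a $\delta$-hyperbolic space, geodesics emanating from a common point fellow-travel up to distance equal to their Gromov product, within $2\delta$ or so.} For condition (1), fixing $h$ and letting $g$ range over the ball $B(r)$: the point $m(g,h)$ lies on the fixed geodesic $[1,h]$ at an integer distance from $1$ that is at most $r$ (since the Gromov product is bounded by $\ell(g)\le r$), so there are at most $r+1$ possible values — certainly polynomial. For condition (2), fixing $g\in B(r)$ and letting $k$ range over all of $G$: here $m(g,k)$ lies on \emph{some} geodesic $[1,k]$, but $\delta$-hyperbolicity forces it to lie within bounded distance of the fixed geodesic $[1,g]$ (because the Gromov product $(g\,|\,k)_1$ is realized as a fellow-travelling distance along $[1,g]$ too), hence within a $(2\delta+C)$-neighbourhood of a geodesic segment of length $\le r$; by local finiteness of $G$ this neighbourhood has at most $\gamma(r+2\delta+C)\le P(r)$ points, polynomial since we may replace $r$ by $r+O(\delta)$. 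Condition (3) is the $G$-equivariant version: $g^{-1}m(g,gh)$ is, by left-invariance of the metric, $m(1,h)$ up to the translation — more precisely one checks $g^{-1}m(g,gh)$ lies within bounded distance of a fixed geodesic $[1,h]$ and at controlled distance from $1$, again giving a polynomial (in fact bounded in $r$) count.

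The main obstacle is making the geometric picture rigorous while keeping $m$ an honest function of the \emph{group elements} rather than of chosen geodesics. Concretely: (a) one must fix, for every $x\in G$, a single geodesic $[1,x]$ in the Cayley graph, and all estimates must be uniform over these choices — this is fine because $\delta$-hyperbolicity gives uniform fellow-travelling for \emph{any} two geodesics with common endpoints; (b) one must handle the discreteness, so ``the point at distance $(g\,|\,k)_1$ along $[1,k]$'' should be replaced by the vertex at distance $\lfloor (g\,|\,k)_1\rfloor$, and the Gromov product of group elements must itself be controlled in terms of word length, which it is: $(g\,|\,k)_1 \le \min(\ell(g),\ell(k))$. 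Once these bookkeeping points are pinned down, conditions (1)--(3) follow from the single inequality that two geodesics out of $1$ stay $2\delta$-close up to time equal to their Gromov product and then never come back close, combined with polynomial growth of balls being irrelevant here — we only need \emph{finiteness} of balls in $G$, which holds for any finitely generated group, to turn ``bounded-diameter set of vertices'' into ``polynomially-many vertices'' after absorbing the additive $O(\delta)$ into the polynomial $P$.
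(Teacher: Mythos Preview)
Your approach is essentially the paper's: define a centroid on the geodesic $[1,k]$ near the thin-triangle centre and verify conditions (1)--(3) to invoke Theorem~\ref{prop:centroid-rd}. The paper phrases the centroid as a point of $N_\delta([1,g])\cap N_\delta([g,h])\cap[1,h]$ rather than via the Gromov product, but the two choices agree up to $O(\delta)$ and the verifications are the same.

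One slip to fix: in condition~(2) you bound the $(2\delta+C)$-neighbourhood of a geodesic segment of length $r$ by $\gamma(r+2\delta+C)$, the cardinality of a \emph{ball} of that radius. For a non-elementary hyperbolic group this grows exponentially in $r$, so the claimed inequality $\gamma(r+2\delta+C)\le P(r)$ is false. The correct count is linear: the tube is covered by the $r{+}1$ balls $B(v,2\delta+C)$ centred at the vertices $v$ of the segment, giving at most $(r+1)\cdot\abs{B(2\delta+C)}$ points, where the second factor is a constant depending only on $\delta$ and the generating set. (Your closing remark that ``polynomial growth of balls is irrelevant---we only need finiteness'' is exactly right and is precisely what makes this linear bound work.) The same tube estimate, not a ball estimate, is what handles condition~(3): $g^{-1}m(g,gh)$ lands in $N_{O(\delta)}([1,h])\cap B_X(1,r+O(\delta))$, which again has linearly many vertices; your sketch there is correct in spirit but should be made explicit rather than asserting the count is ``bounded in $r$''.
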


\begin{proof}
  Suppose that \(G\) is \(\delta\)-hyperbolic, i.e.\ any side of a
  geodesic triangle in the Cayley graph of \(G\) with respect to some
  fixed finite generating set \(S\) is contained in the union of the
  \(\delta\)-neighborhoods of the remaining two sides.

  Let \(X\) be a Cayley graph of \(G\) with the standard
  \(G\)-action. On \(G\) take the length induced from the path metric
  on \(X\). For any \(g\in G\) fix an oriented geodesic
  \([1,g]\subseteq X\) from \(1\) to \(g\), and extend equivariantly
  to a choice of oriented geodesics joining any two \(g,h\in G\).

  Now, for every pair \(g,h\in G\) pick
  \begin{equation}
    m(g,h) \in N_{\delta}([1,g])\cap N_{\delta}([g,h])\cap [1,h].
  \end{equation}
  We claim that \(m\) is a centroid map for \(G\). Indeed, we have
  \begin{align}
    m(B(r)\times\{h\}) &\subseteq \bigcup_{g\in B(r)} N_{\delta}([1,g])
    \cap [1,h] = B_{X}(1,r+\delta)\cap[1,h],\\
    m(\{g\}\times G) &\subseteq N_{\delta}([1,g]),\\
    \begin{split}
      \{g^{-1}m(g,gh) : g\in B(r)\} & \subseteq \bigcup_{g \in B(r)}
      N_{\delta}([g^{-1},1]) \cap N_{\delta}([1,h]) = \\
      & = B_{X}(1,r+\delta)\cap N_{\delta}([1,h]),
    \end{split}
  \end{align}
  and all three sets have cardinalities bounded by a linear polynomial
  in \(r\), \(\abs{g}\), and \(r\), respectively. Hence, \(G\) has
  property RD.
\end{proof}

Before we prove RD for the second class of groups, we will explain
some facts about CAT(0) cube complexes. Roughly speaking, a cube
complex is a metric space obtained by gluing euclidean unit cubes \([0,1]^{n}\)
by isometries of walls. There is a nice combinatorial criterion for
the resulting complex \(K\) to be a CAT(0) metric space, provided that
it is already simply connected. Namely this
is equivalent to the links of vertices of \(K\) being flag simplicial
complexes. In other words, whenever at a vertex \(v\) of \(K\) you see
something that looks like the neighborhood of a vertex in the boundary
of a cube (which corresponds to a boundary of a simplex in the link of
\(v\)), this is because in fact there is a cube in \(K\), whose boundary you
are looking at (and which corresponds to the simplex whose boundary
you see in the link).

From this description it follows that a CAT(0) cube complex is
uniquely determined by its \(1\)-skeleton \(X^{(1)}\), consisting of
vertices and edges of the cubes of \(X\). Indeed, to reconstruct
\(X\), one just has to glue in a cube into every subgraph of \(X^{(1)}\) isomorphic to the
\(1\)-skeleton of a cube.

Not all graphs are \(1\)-skeletons of CAT(0) cube complexes, but there
is again a nice criterion for this. If \(u,v\) are vertices of a graph
\(\Gamma\), denote by \([u,v]\) the set of all vertices of \(\Gamma\)
lying on some path between \(u\) and \(v\) of minimal length. Then
\(\Gamma\) is the \(1\)-skeleton of some CAT(0) cube complex if and
only if for all triples of vertices \(u,v,w\) of \(\Gamma\) the
intersection \([u,v]\cap[v,w]\cap[w,u]\) consists if exactly one
vertex \cite{Chepoi2000}.

An \(n\)-dimensional cube contains \(n\) distinct \emph{midcubes} of the form
\([0,1]^{k}\times\{1/2\}\times[0,1]^{n-k-1}\). If \(K\) is a cube
complex containing two adjacent cubes \(Q_{1}\) and \(Q_{2}\), we say
that two midcubes \(M_{i}\subseteq Q_{i}\) are compatible if
\(M_{1}\cap Q_{2} = M_{2}\cap Q_{1} \ne \emptyset\). In other words,
both midcubes have the same nontrivial intersection with the common
walls of \(Q_{1}\) and \(Q_{2}\). This can be extended to an
equivalence relation on the set of all midcubes in \(K\). The unions
of equivalence classes of this relation are called \emph{hyperplanes}
of \(K\).

If \(K\) is a CAT(0) cube complex, then every hyperplane separates
\(K\) into two components. Moreover, if we denote by \(d\) the path metric on
\(K^{(1)}\), then for any two vertices \(v\) and \(w\) of \(K\), the
distance \(d(v,w)\) is equal to the number of hyperplanes separating
\(v\) and \(w\). Finally, if \(h_{1}\ldots,h_{n}\) is a family of
pairwise intersecting hyperplanes of \(K\), they have non-empty common
intersection \(\bigcap h_{i}\). If this intersection contains a point
in the interior of a cube \(Q\), and \(h_{i}\) are pairwise distinct,
then each \(h_{i}\) contains a distinct midcube of \(Q\), and thus \(n\) is
bounded by \(\dim Q\) \cite{Sageev1995}.

\begin{proposition}\label{prop:catzero-cube-rd}
  If \(G\) acts cellularly on a finite-dimensional CAT(0) cube complex
  with uniformly bounded stabilizers, then \(G\) has property RD.
\end{proposition}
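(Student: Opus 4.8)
The plan is to verify the centroid map axioms for a suitable action and invoke Theorem~\ref{prop:centroid-rd}. Let $K$ be the given CAT(0) cube complex, $d$ the path metric on $K^{(1)}$, and let $G$ act on the vertex set of $K^{(1)}$; picking a basepoint vertex $p$ gives the length $\ell(g)=d(p,gp)$, which dominates the word-length (and, conversely, is dominated by it, since $G$ acts with bounded stabilizers on a locally finite complex — actually local finiteness must be checked or assumed; for a group acting cocompactly it is automatic, and in general one restricts attention to the orbit $Gp$ which is locally finite because stabilizers are bounded and $G$ is finitely generated). So it suffices to produce a centroid map $m\colon G^{2}\to K^{(1)}$ with respect to this action.

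The natural candidate is the cubical median: for vertices $u,v,w$ of $K$, the combinatorial characterization quoted from \cite{Chepoi2000} says $[u,v]\cap[v,w]\cap[w,u]$ is a single vertex, the \emph{median} $\mathrm{med}(u,v,w)$. Define $m(g,h)=\mathrm{med}(p,gp,hp)$. Equivariance of the median ($k\cdot\mathrm{med}(u,v,w)=\mathrm{med}(ku,kv,kw)$) will be used repeatedly. Now I would check the three axioms by counting hyperplanes. The key combinatorial fact is that $x=\mathrm{med}(p,gp,hp)$ is the unique vertex such that a hyperplane separates $x$ from $p$ iff it separates $p$ from both $gp$ and $hp$; in particular every hyperplane separating $x$ from $p$ separates $p$ from $gp$, so $d(p,x)\le d(p,gp)=\ell(g)$. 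Hence $m(B_G(r)\times\{h\})\subseteq B_{K^{(1)}}(p,r)$, and $\{g^{-1}m(g,gh):g\in B_G(r)\}=\{\mathrm{med}(g^{-1}p,p,hp):g\in B_G(r)\}\subseteq B_{K^{(1)}}(p,r)$ since $d(p,\mathrm{med}(g^{-1}p,p,hp))\le d(p,g^{-1}p)=\ell(g)\le r$. Each of these balls is finite (local finiteness of $Gp$), but we need a \emph{polynomial} bound on its size in $r$: this is where finite-dimensionality enters, as a CAT(0) cube complex of dimension $D$ on which a group acts with bounded stabilizers has polynomial growth of balls — wait, that is false in general (free groups act on trees).

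So the polynomial bound cannot come from growth of $K$; it must come from the median structure constraining which vertices actually occur as values of $m$. The right statement for axiom (1), say, is: the image $m(B_G(r)\times\{h\})$ consists of medians $\mathrm{med}(p,gp,hp)$, all of which lie on the geodesic interval $[p,hp]$. Along a fixed geodesic interval in a $D$-dimensional CAT(0) cube complex there are at most... again potentially exponentially many vertices. The honest route is different: one bounds not the ambient ball but the number of distinct values, using that $m(g,h)\in[p,hp]$ and that $m(g,h)$ is determined by \emph{which} of the $\ell(g)\le r$ hyperplanes separating $p$ from $gp$ also separate $p$ from $hp$ — but this still needs the interval $[p,hp]$ to be thin. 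The clean fact, and the one I expect to be the crux, is this: for $g$ ranging over $B_G(r)$, the median $m(g,h)$ is the vertex of $[p,hp]$ closest to $gp$, hence the set $\{m(g,h):g\in B_G(r)\}$ is exactly the image of $gp\mapsto(\text{nearest-point projection onto }[p,hp])$ restricted to $B_{K^{(1)}}(p,r)\cap Gp$; and a nearest-point projection onto a geodesic interval moves a point by at least $d(p,x)-$ its projection, so the preimages are controlled. Getting from here to a polynomial-in-$r$ count is the real work, and the intended argument (following \cite{Sapir2015}) surely exploits finite-dimensionality via the fact that a family of pairwise-crossing hyperplanes through a cube has size $\le D$: this bounds the number of hyperplanes one must ``flip'' to move between nearby medians, turning the count of values of $m$ into a count of subsets of bounded ``complexity'' of an $r$-ball, which is polynomial.

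Concretely I would: (i) set up the median map and record equivariance and the hyperplane-separation description of medians; (ii) prove $m(g,h)$ is the nearest-point projection of $gp$ onto $[p,hp]$, and symmetrically; (iii) for axiom (1), show $\{m(g,h):g\in B_G(r)\}\subseteq[p,hp]$ and bound its cardinality by bounding, for each vertex $x\in[p,hp]$, how its ``distance profile'' to $B_G(r)$ forces $x$ to lie within distance $r$ of $p$ and then bounding the number of such $x$ using that geodesic intervals in a $D$-dimensional CAT(0) cube complex embed in $\ZZ^{D}$-cubulations so that along them the number of vertices at ``combinatorial depth'' $\le r$ grows like $r^{D}$ — this is the step I expect to be the main obstacle and where finite-dimensionality is indispensable; (iv) axiom (2) is immediate since $m(\{g\}\times G)\subseteq[p,gp]$, which has at most (number of hyperplanes separating $p,gp$)$^{D}$-many vertices, i.e. $\le\binom{\ell(g)}{\le D}$, polynomial in $\abs g$; (v) axiom (3) follows from (ii) and equivariance exactly as in (1), since $g^{-1}m(g,gh)=\mathrm{med}(g^{-1}p,p,hp)\in[p,hp]$. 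Then Theorem~\ref{prop:centroid-rd} applies and $G$ has property RD.
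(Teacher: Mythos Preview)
Your approach is the same as the paper's: use the cubical median \(m(g,h)=\mathrm{med}(o,go,ho)\) as the centroid map and verify the three axioms by showing that each relevant image lies in \(B_{X}(o,r)\cap[o,w]\) for an appropriate vertex \(w\). The three inclusions you write down are exactly those in the paper.

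The genuine gap is precisely the step you yourself flag as ``the main obstacle'': the polynomial bound on \(\lvert [v,w]\cap B_{X}(v,r)\rvert\). Your gestures toward it (nearest-point projections, embedding intervals in \(\ZZ^{D}\)-cubulations, \(\binom{\ell(g)}{\le D}\)) do not constitute a proof, and the \(\binom{\ell(g)}{\le D}\) count in (iv) is not obviously correct. The paper handles this cleanly and you should too: let \(H\) be the set of hyperplanes separating \(v\) from \(w\), partially ordered by ``\(h_{1}<h_{2}\) iff \(h_{1}\) separates \(v\) from \(h_{2}\)''. Antichains in \((H,<)\) are families of pairwise-crossing hyperplanes, hence have size \(\le D=\dim K\). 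By Dilworth's theorem \(H\) decomposes into chains \(C_{1},\ldots,C_{D}\). For \(u\in[v,w]\), let \(c_{i}(u)\) be the number of hyperplanes in \(C_{i}\) separating \(u\) from \(v\); the tuple \((c_{i}(u))_{i\le D}\) determines the set of hyperplanes separating \(u\) from \(v\), hence determines \(u\). If \(d(v,u)\le r\) then each \(c_{i}(u)\le r\), so \(\lvert [v,w]\cap B_{X}(v,r)\rvert \le (r+1)^{D}\). This single estimate settles all three axioms at once.

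Two side remarks: your first paragraph's discussion of \(\ell\) dominating word-length is backwards (word-length always dominates \(\ell\), which is exactly the direction needed to conclude RD for word-length from RD for \(\ell\)), and your step (ii) on nearest-point projections is unnecessary---the inclusions \(m(g,h)\in[o,ho]\cap B_X(o,\ell(g))\) follow directly from the definition of the median, as you already observed.
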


\begin{proof}
  Denote by \(K\) the complex acted upon by \(G\). Let \(X=K^{(0)}\)
  with the distance induced from \(K^{(1)}\). We choose a base vertex
  \(o\in X\), fix the length function \(\ell(g)=d(o,go)\), and define
  \(m(g,h)\) to be the unique vertex in the intersection
  \([o,go]\cap[o,ho]\cap[go,ho]\). We have the following inclusions:
  \begin{align}
    m(B(r)\times\{h\}) & \subseteq B_{X}(o,r) \cap [o,ho], \\
    m(\{g\}\times G) &\subseteq B_{X}(o,\ell(g))\cap [o,go], \\
    \{ g^{-1}m(g,gh) : g\in B(r)\} & \subseteq B_{X}(o,r) \cap [o,ho].
  \end{align}

  We therefore need to estimate the growth \(r\mapsto \abs{[v,w]\cap
    B_{X}(v,r)}\) of intervals in \(K\). We will achieve this by
  introducing a coordinate system in \([v,w]\). Let \(H\) be the set
  of hyperplanes of \(K\) separating \(v\) from \(w\). We may put a
  partial order on \(H\) in the following way: if \(h_{1},h_{2}\in H\),
  we have \(h_{1} < h_{2}\) if and only if \(h_{1}\cap h_{2} =
  \emptyset\), and moreover, \(h_{1}\) separates \(v\) from
  \(h_{2}\). This ordering has the property that if \(p\) is a minimal path
  from \(v\) to \(w\), and \(<_{p}\) is the order on \(H\) in which
  \(p\) crosses the hyperplanes, then \(<_{p}\) extends \(<\). This
  means that if \(C\subseteq H\) is a chain with respect to \(<\), and \(u\in [v,w]\), then
  the set of hyperplanes from \(C\) separating \(v\) from \(u\) is an
  initial segment of \(C\), and therefore it is determined by its cardinality.

  Now, decompose \(H\) into a disjoint union \(H=C_{1}\cup\ldots\cup
  C_{N}\) of chains. For \(u\in [v,w]\) let \(c_{i}(u)\) be the number
  of hyperplanes in \(C_{i}\) separating \(u\) from \(v\). The
  sequence of numbers \((c_{i}(u))_{i\leq N}\) allows to reconstruct
  the set of all hyperplanes separating \(u\) from \(v\), which
  uniquely determines \(u\) (any two distinct vertices are separated
  by a hyperplane, which separates exactly one of them from
  \(v\)). Therefore, the assignment \(u\mapsto (c_{i}(u))_{i\leq N}\in
  \NN^{N}\) is injective. But if \(u\in B_{X}(v,r)\), then
  \(c_{i}(u)\leq r\), so the cardinality of \(B_{X}(v,r)\cap[v,w]\) is
  bounded by \(r^{N}\).

  It now remains to find a bound on \(N\) independent of the choice of
  the interval \([v,w]\). Observe, that antichains in \((H,<)\) have
  at most \(\dim K\) elements. By Dilworth's Theorem, the maximal
  cardinality of an antichain equals to the minimal cardinality of a
  decomposition into chains, so we may take \(N=\dim K\), which ends
  the proof.
\end{proof}

\subsection{The relative centroid property}
\label{sec:relat-centr-prop}

Before defining the relative centroid property, we will try to give
a more symmetric formulation of the centroid property. As before, let
\(G\) act by isometries on \((X,d)\) with uniformly bounded
stabilizers.

Observe that the maps from \(G^{n}\) to \(X\) are in one-to-one
correspondence with \(G\)-equivariant maps from \(G^{n+1}\) to \(X\).
To a map \(f\colon G^{n}\to X\) one just assigns the equivariant map
\begin{equation}
  \tilde{f}(g_{0},g_{1},\ldots,g_{n}) = g_{0}f(g_{0}^{-1}g_{1},\ldots,
  g_{0}^{-1}g_{n}).
\end{equation}
In this way, we may replace the centroid map \(m\colon G^{2}\to X\) by
its equivariant extension \(\tilde{m}\colon G^{3}\to X\). The three
estimates from the definition of the centroid map translate into
\begin{enumerate}\setlength{\itemsep}{3pt}
\item \(\abs{\tilde{m}(\{g_{1}\} \times B(g_{1},r) \times \{g_{3}\})} \leq
  P(r)\) for all \(g_{1},g_{3}\in G\),
\item \(\abs{ \tilde{m}( \{g_{1}\} \times \{ g_{2} \} \times G ) } \leq P(r)\)
  for all \(g_{1}\in G\) and \(g_{2}\in B(g_{1},r)\),
\item \( \abs{ \tilde{m}( B(g_{2},r)\times\{g_{2}\}\times\{g_{3}\} ) } \leq
  P(r) \) for all \(g_{2},g_{3}\in G\).
\end{enumerate}
We can go even further with our symmetrization. Define
\begin{equation}
\Delta(r) = \{(g_{1},g_{2},g_{3})\in G^{3} : \ell(g_{1}^{-1}g_{2})
\leq r\},
\end{equation}
i.e.\ the set of ordered triangles in \(G\), whose first side has
length at most \(r\). The asymmetry can now be hidden in the
definition of \(\Delta(r)\), and we can reformulate conditions (1)--(3)
as
\begin{enumerate}\setlength{\itemsep}{3pt}
\item \( \abs{ \tilde{m}( \Delta(r) \cap \{g_{1}\}\times G \times \{ g_{3}\} )
  } \leq P(r) \),
\item \( \abs{ \tilde{m}( \Delta(r) \cap \{g_{1}\} \times \{ g_{2}\}\times G )
  } \leq P(r) \),
\item \( \abs{ \tilde{m}( \Delta(r) \cap G \times \{g_{2}\}\times \{ g_{3}\} )
  } \leq P(r) \).
\end{enumerate}

Now, let \(T_{1},\ldots, T_{m}\subseteq X^{3}\) be \(G\)-invariant
subsets, and let \(p_{ij}\colon X_{1}\times X_{2}\times X_{3}\to
X_{i}\times X_{j}\) denote the standard projection of the cartesian
product. A \emph{relative centroid map} is a \(G\)-equivariant map
\(\rc \colon G^{3} \to \bigcup_{i} T_{i}\) for which there exists a
polynomial \(P\) such that for all \(g_{1},g_{2},g_{3}\in G\)
\begin{enumerate}\setlength{\itemsep}{3pt}
\item if \(\rc(g_{1},g_{2},g_{3}) = (x_{1},x_{2},x_{3})\), then
  \(d(x_{i},x_{j})\leq P(\ell(g_{i}^{-1}g_{j}))\) for \(i,j=1,2,3\)
\item \(\abs{ p_{12}\circ\rc( \Delta(r) \cap \{g_{1}\} \times \{ g_{2}\} \times G) } \leq P(r)\),
\item \(\abs{ p_{13}\circ\rc( \Delta(r) \cap \{g_{1}\} \times G \times \{ g_{3}\}) } \leq P(r)\),
\item \(\abs{ p_{23}\circ\rc( \Delta(r) \cap G \times \{g_{2}\} \times
    \{ g_{3}\}) } \leq P(r)\).
\end{enumerate}
This time, instead of points, to each ordered triangle in \(G\) we
equivariantly assign an ordered triangle in \(X\), belonging to
a certain prescribed class. The estimates say that if among all
triangles with the first side of length at most \(r\) we fix one side,
and allow the remaining vertex to vary, then in the assigned triangle,
the ``image'' of the fixed side is limited to \(P(r)\) possibilities.
Also, note that if we take \(m=1\) and \(T_{1}=\{(x,x,x) : x\in X\}\),
then the corresponding relative centroid maps reduce to ordinary centroid maps defined in section~\ref{sec:centroid-property}.

We say that \(G\) has \emph{relative centroid property} with respect
to subgroups \(H_{1},\ldots,H_{m}\) if it admits a relative centroid
map with \(X=G^{3}\) and \(T_{i} = G\cdot H_{i}^{3}\).

\begin{theorem}
  Suppose that the length on \(G\) is proper, and that \(G\) has the
  relative centroid property with respect to subgroups \(H_{1},\ldots,
  H_{m}\), such that each \(H_{i}\) has property RD with respect to
  the restriction of the length of \(G\).
\end{theorem}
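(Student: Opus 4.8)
The statement to prove is that \(G\) has property RD (with respect to its proper length, hence, since word lengths dominate all lengths, with respect to the word length). The plan is to run the argument of Theorem~\ref{prop:centroid-rd} almost verbatim, with the relative centroid map \(\rc\) in place of the centroid map, and to replace its final step — where the collision counts \(c_{gh}\) were bounded by a constant times a polynomial — by an application of property RD for the peripheral subgroups \(H_i\). Two simplifications are available at the outset: since here \(X=G\) carries the left-translation action, all stabilizers are trivial, so the constant \(K\) of the centroid proof equals \(1\); and since \(m\) is finite, we may fix once and for all a single polynomial \(Q\) witnessing property RD, in the sense of Definition~\ref{def:norm-estimate}, simultaneously for all \(H_1,\dots,H_m\) with respect to the restriction \(\ell_G\res_{H_i}\).

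The computation itself: fix \(f\in\CC[G]\) with \(\ell(f)=r\) and \(\xi\in\ell^2(G)\), and expand \(\norm{f*\xi}_2^2=\sum_k\abs{\sum_{\ell(g)\le r}f(g)\xi(g^{-1}k)}^2\) as in Theorem~\ref{prop:centroid-rd}. To the triangle \((1,g,k)\) one applies \(\rc\), getting \(\rc(1,g,k)=(x_1,x_2,x_3)\) inside some \(G\cdot H_i^3\), so \(x_1,x_2,x_3\) lie in one coset \(cH_i\); condition~(1) gives \(\ell_G(x_1^{-1}x_2)=d(x_1,x_2)\le P(\ell(g))\le P(r)\). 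I would then perform the same two Cauchy–Schwarz steps as in the centroid proof: first group, for fixed \(k\), the summands over \(g\) by the value \(p_{13}\rc(1,g,k)=(x_1,x_3)\), which by condition~(3) takes at most \(P(r)\) values; then, after the substitution \(h=g^{-1}k\), reorganize the resulting double sum over \((g,h)\). The new feature is that in place of a naked collision count the residual summation runs over the middle vertex \(x_2\) of the assigned triangle, which moves inside a coset of \(H_i\); fixing that coset — this is where conditions~(2) and~(4) are used, bounding, for \(g\) resp.\ \(h\) fixed, the number of possibilities for the fixed side of the assigned triangle by \(P(r)\), so that only polynomially many cosets are relevant — the sum over \(x_2\) becomes literally a convolution on \(H_i\) of a function \(F\) supported in the \(\ell_G\)-ball of radius \(P(r)\) against an \(\ell^2\)-function \(\Xi\) built from \(\xi\). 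Applying \(\norm{F*\Xi}_2\le\norm{F}_{op}\norm{\Xi}_2\le Q(P(r))\norm{F}_2\norm{\Xi}_2\) and collecting the polynomially-many factors \(P(r)\) from the two groupings yields \(\norm{f*\xi}_2\le R(r)\norm{f}_2\norm{\xi}_2\) for a polynomial \(R\), which is property RD in the sense of Definition~\ref{def:norm-estimate}.

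The main obstacle is this middle step: arranging the Cauchy–Schwarz estimates so that, once the data controlled by the relative centroid conditions has been fixed, the leftover sum is honestly of the form \(\langle\lambda_{H_i}(F)\,\Xi,H\rangle\) for functions \(F,\Xi,H\) on (a coset of) \(H_i\) obtained by ``pushing \(f,\xi,\eta\) onto the assigned triangle'', and — the genuinely delicate point — verifying that this pushing-forward inflates neither the relevant \(\ell^2\)-norms (beyond a polynomial-in-\(r\) factor, after summing the contributions of the polynomially-many cosets) nor the support radius of \(F\) (which must stay \(\le P(r)\) in \(\ell_G\)). It is essential here that we only ever use \(\ell_G\res_{H_i}\) and never the intrinsic length of \(H_i\): no distortion hypothesis is available, and, as the remark after Theorem~\ref{thm:extensions} shows, such a hypothesis cannot be dispensed with for free — which is precisely why property RD of the \(H_i\) is required relative to \(\ell_G\res_{H_i}\). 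Properness of \(\ell_G\) enters only to ensure that the auxiliary functions \(F,\Xi,H\) are genuinely defined (the balls and fibres involved being finite) and that the regroupings are legitimate.
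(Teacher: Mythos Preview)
The paper does not include a proof of this theorem: the statement is recorded (with its conclusion elided --- the intended conclusion is ``Then \(G\) has property RD'') and immediately followed by a discussion of its applications to relatively hyperbolic groups and graph products, with the proof deferred to Sapir's article \cite{Sapir2015}. There is therefore no proof in the paper to compare your proposal against.

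That said, your plan is the natural one and is consistent with Sapir's argument: one does re-run the computation of Theorem~\ref{prop:centroid-rd}, grouping now by the projections \(p_{ij}\circ\rc\) rather than by a single centroid, and the heart of the matter is exactly what you flag as the ``main obstacle'' --- arranging the bookkeeping so that, once the boundedly many values of the two controlled projections have been fixed, the residual inner sum is an honest convolution on a single coset of some \(H_i\), with the \(f\)-factor supported in an \(\ell_G\)-ball of radius \(P(r)\), so that the peripheral RD inequality applies. Your remarks about why \(\ell_G\res_{H_i}\) rather than the intrinsic length must be used, and about where properness enters, are correct. What you have written is a coherent outline rather than a proof; the missing content is precisely the explicit construction of the functions \(F,\Xi\) on \(H_i\) and the verification that the push-forward controls their norms, which is where the actual work in \cite{Sapir2015} lies.
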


This criterion can be used to show that a group relatively hyperbolic
with respect to subgroups with property RD, has property RD itself. It
can also be applied to \emph{graph products}. For a finite simple
graph \(\Gamma=(V,E)\) the \(\Gamma\)-product of a family of groups
\(\{G_{v}\}_{v\in V}\) is defined as the quotient of the free product
of the \(G_{v}\) in which elements of groups joined by an edge are
forced to commute:
\begin{equation}
  \prod_{\Gamma} G_{v} = \mathop{\bigstar}\limits_{v} G_{v} / \langle\langle \{ [x,y] :
  x\in G_{v}, y\in G_{w}, \{v,w\}\in E\} \rangle\rangle.
\end{equation}
Any element \(g\) of this graph product can be written in the form
\(g=g_{1}g_{2}\cdots g_{n}\) with \(g_{i}\in G_{v(i)}\) and \(v(i)\ne
v(i+1)\). If \(n\) is minimal, we define
\begin{equation}
  \ell(g) = n+\sum_{i=1}^{n} \ell_{G_{v(i)}}(g_{i}).
\end{equation}

For a clique \(C\subseteq\Gamma\), the \emph{clique subgroup}
\(G_{C} \leq \prod_{\Gamma}G_{v}\) is the direct product \(\prod_{v\in
C} G_{v}\). The graph product \(\prod_{\Gamma}G_{v}\) with the length
\(\ell\) defined above has the relative
centroid property with respect to its clique subgroups, and therefore
if the vertex groups \(G_{v}\) have property RD, so does their graph product.

\section{Some questions}
\label{sec:problems}

\subsection{Property RD for locally compact groups}
\label{sec:property-rd-locally}

Throughout the notes we assumed that \(G\) is a finitely generated group. Property RD can be however defined for locally compact groups. Everything it needs is the Haar measure, allowing to define the the regular representation. In this setting we require length functions to be Borel maps and replace the group algebra \(\CC[G]\) with the space \(C_{c}(G)\) of compactly supported continuous functions on \(G\). One not entirely obvious thing to note here is that any length function is bounded on compact sets \cite{Schweitzer1993}. Therefore, if \(G\) has a compact generating set \(K\), then the associated word-length is proper (preimages of bounded intervals are relatively compact), dominates all other length functions, and plays exactly the same role as word-lengths in the finitely generated case.

It turns out that property RD implies that \(G\) is unimodular \cite{Ronghui1996}. The following was proved in \cite{Jolissaint1990}.

\begin{theorem}
  If \(G\) is a second countable locally compact group with a length function \(\ell\), adsmitting a discrete cocompact subgroup \(\Gamma\) satisfying property RD with respect to \(\ell|_{\Gamma}\), then \(G\) has property RD with respect to \(\ell\).
\end{theorem}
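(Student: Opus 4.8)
The plan is to use the operator-norm formulation (Definition~\ref{def:norm-estimate}) for $G$ with the length $\ell$, and to transfer the RD estimate from $\Gamma$ to $G$ by a standard averaging/induced-representation argument. First I would fix a Borel fundamental domain $F\subseteq G$ for the right action of $\Gamma$, so that every $g\in G$ decomposes uniquely as $g=f(g)\gamma(g)$ with $f(g)\in F$ and $\gamma(g)\in\Gamma$; by cocompactness $F$ can be chosen relatively compact, hence (using that length functions are bounded on compact sets, as noted in the excerpt) $\ell$ is uniformly bounded on $F$, say by a constant $R$. The Haar measure on $G$ then disintegrates as the product of counting measure on $\Gamma$ and a finite measure on $F$, which lets one identify $L^2(G)$ with $\ell^2(\Gamma)\otimes L^2(F)$ and realize the regular representation of $G$, restricted to $\Gamma$, essentially as a multiple of the regular representation of $\Gamma$.

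Next, given $\phi\in C_c(G)$ with $\ell(\phi)=r$, I would slice it along cosets: write $\phi$ as a ``$\Gamma$-valued'' kernel $\phi(f,f',\gamma)$ for $f,f'\in F$, $\gamma\in\Gamma$, where for fixed $f,f'$ the function $\gamma\mapsto\phi(f,f',\gamma)$ is finitely supported with $\ell_\Gamma$-support controlled by $r+2R$ (here one uses that $\ell_\Gamma|_\Gamma$ and $\ell|_\Gamma$ are comparable up to the bounded error from $F$, together with the domination of $\ell|_\Gamma$ by the word length of $\Gamma$ — this is where the RD hypothesis on $\Gamma$ for $\ell|_\Gamma$ enters). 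Applying the RD inequality for $\Gamma$ to each slice gives an operator-norm bound on the corresponding convolution operator on $\ell^2(\Gamma)$ of the form $P(r+2R)\,\|\phi(f,f',\cdot)\|_{\ell^2(\Gamma)}$. Then I would assemble these slice-wise bounds: the operator $\lambda_G(\phi)$ on $L^2(G)\cong\int_F^{\oplus}\ell^2(\Gamma)$ has a block structure indexed by $(f,f')\in F\times F$, and a Schur-test / Cauchy–Schwarz argument over the two finite factors $F$ bounds its operator norm by $(\text{const}\cdot|F|)\cdot P(r+2R)\cdot\|\phi\|_{L^2(G)}$, which is a polynomial in $r$ times $\|\phi\|_2$, giving property RD for $G$.

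The main obstacle I expect is bookkeeping the two ``transverse'' $F$-variables correctly: the convolution product on $G$ does not factor through $\Gamma$ alone, so $\lambda_G(\phi)$ is genuinely a $\Gamma$-equivariant family of operators between different fibers, and one must check that the mixed terms (where the $F$-coordinate changes) still only contribute a bounded multiplicative constant, using finiteness of the Haar measure of $F$ and a Cauchy–Schwarz estimate rather than naive additivity. A secondary technical point is verifying that the word length on $\Gamma$ (with respect to some finite generating set) is dominated by the restriction $\ell|_\Gamma$ up to the additive constant coming from $F$, so that the polynomial $P$ furnished by RD for $\Gamma$ can legitimately be evaluated at $r+2R$; this follows because $\Gamma$ is cocompact, hence quasi-isometrically embedded in $G$ with respect to a word length on $G$, and $\ell$ is dominated by that word length. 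Once these comparisons are in place, the rest is the routine slice-and-reassemble computation sketched above.
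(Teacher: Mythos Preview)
The paper does not actually give a proof of this theorem: it is stated in Section~\ref{sec:property-rd-locally} with a reference to \cite{Jolissaint1990} and nothing more. So there is no ``paper's own proof'' to compare against. That said, your outline is essentially the standard argument from \cite{Jolissaint1990}: choose a relatively compact Borel fundamental domain \(F\), use boundedness of \(\ell\) on \(\overline{F}\), identify \(L^{2}(G)\) with \(\ell^{2}(\Gamma)\otimes L^{2}(F)\), and transfer the RD inequality from \(\Gamma\) slice by slice, reassembling with Cauchy--Schwarz over the finite-measure factor \(F\).

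Two small points where your write-up drifts. First, the detour through word lengths on \(\Gamma\) is both unnecessary and shaky: the hypothesis is that \(\Gamma\) has RD \emph{with respect to \(\ell|_{\Gamma}\)}, so once you observe that the \(\Gamma\)-support of each slice has \(\ell|_{\Gamma}\)-radius at most \(r+2R\) (from \(\ell(\gamma)\leq \ell(f)+\ell(f\gamma f'^{-1})+\ell(f')\leq r+2R\)), you can apply the assumed polynomial directly---no comparison with a word length is needed, and indeed \(\Gamma\) is not assumed finitely generated. Second, for the Haar measure to disintegrate as you describe, you need \(G\) to be unimodular; this is automatic because \(G\) has a lattice, but it is worth saying. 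With those adjustments the plan is sound; the ``mixed terms'' obstacle you flag is real but is handled exactly by the Schur-type estimate you propose, using that \(F\) has finite Haar measure.
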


It is an open question whether the converse statement holds.

\begin{conjecture}(Valette's conjecture)
  If \(\Gamma\) is a cocompact lattice in \(G\), and \(G\) has property RD, then so does \(\Gamma\).
\end{conjecture}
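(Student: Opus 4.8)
The plan is to transport the operator-norm estimate for $G$ down to $\Gamma$ by realizing functions on $\Gamma$ as singular convolvers on $L^{2}(G)$, and then to see precisely why this transport is obstructed. Fix a relatively compact Borel fundamental domain $F$ for the left action of $\Gamma$ on $G$, so that $G=\bigsqcup_{\gamma\in\Gamma}\gamma F$ and $L^{2}(G)\cong\ell^{2}(\Gamma)\otimes L^{2}(F)$ via $g=\gamma v\mapsto(\gamma,v)$. Under this identification the left regular representation of $\Gamma$ becomes $\lambda_{G}\res_{\Gamma}\cong\lambda_{\Gamma}\otimes\id$, so that for $f\in\CC[\Gamma]$ one has
\begin{equation}
  \norm{\lambda_{\Gamma}(f)}_{op}=\norm{\lambda_{G}(\nu_{f})}_{op},\qquad \nu_{f}=\sum_{\gamma}f(\gamma)\delta_{\gamma}.
\end{equation}
Since $\Gamma$ is cocompact, the Milnor--Švarc lemma gives $\ell_{G}\res_{\Gamma}\asymp\ell_{\Gamma}$, so it suffices to prove property RD for $\Gamma$ with respect to $\ell_{G}\res_{\Gamma}$; in particular $\ell_{G}(\nu_{f})$ is polynomially comparable to $\ell_{\Gamma}(f)$.

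Next I would approximate the singular measure $\nu_{f}$ by genuine $C_{c}(G)$-convolvers and feed these into property RD for $G$. Choose a symmetric approximate identity $\eta_{\epsilon}\in C_{c}(G)$, with $\eta_{\epsilon}\geq 0$ and $\int\eta_{\epsilon}=1$, supported in a neighborhood $V_{\epsilon}$ of the identity so small that the translates $\gamma V_{\epsilon}$ are pairwise disjoint. Put $\phi_{\epsilon}=\nu_{f}*\eta_{\epsilon}\in C_{c}(G)$, so that $\lambda_{G}(\phi_{\epsilon})=\lambda_{G}(\nu_{f})\lambda_{G}(\eta_{\epsilon})$. Disjointness of supports gives $\norm{\phi_{\epsilon}}_{2}=\norm{f}_{2}\norm{\eta_{\epsilon}}_{2}$, while $\supp\phi_{\epsilon}\subseteq\bigcup_{\gamma\in\supp f}\gamma V_{\epsilon}$ yields $\ell_{G}(\phi_{\epsilon})\leq\ell_{G}\res_{\Gamma}(f)+O(1)$. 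Property RD for $G$ then supplies a polynomial $P$ with
\begin{equation}
  \norm{\lambda_{G}(\nu_{f})\lambda_{G}(\eta_{\epsilon})}_{op}=\norm{\lambda_{G}(\phi_{\epsilon})}_{op}\leq P\bigl(\ell_{G}\res_{\Gamma}(f)+O(1)\bigr)\,\norm{f}_{2}\,\norm{\eta_{\epsilon}}_{2},
\end{equation}
and the goal would be to pass $\epsilon\to 0$ and recover $\norm{\lambda_{G}(\nu_{f})}_{op}\leq Q(\ell_{\Gamma}(f))\norm{f}_{2}$.

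Here is exactly where the argument stalls, and this is the main obstacle. As $\epsilon\to 0$ one has $\lambda_{G}(\eta_{\epsilon})\to\id$ only in the strong operator topology, never in norm, so $\lambda_{G}(\phi_{\epsilon})\to\lambda_{G}(\nu_{f})$ strongly but not in norm; meanwhile $\norm{\eta_{\epsilon}}_{2}\to\infty$. Thus the right-hand side of the displayed estimate diverges precisely as the left-hand side approaches the quantity we wish to bound, and the inequality becomes vacuous in the limit. Conceptually, $C^{*}_{r}(\Gamma)$ embeds into the multiplier algebra $M(C^{*}_{r}(G))$ — the unitaries $\lambda_{G}(\gamma)$ are multipliers — but not into $C^{*}_{r}(G)$ itself, whereas property RD for $G$ is an estimate internal to $C^{*}_{r}(G)$; no amount of smearing carries it across this gap without a compensating gain on the operator-norm side that the black-box RD inequality does not provide. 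The equivalent matrix-coefficient route — induce a tempered representation $\pi$ of $\Gamma$ to the tempered representation $\mathrm{Ind}_{\Gamma}^{G}\pi$ of $G$, apply Definition~\ref{def:decay-matrix-coeffs} for $G$, and compare the integral over $G$ with the sum over $\Gamma$ — founders on the same rock in disguise: the induced matrix coefficient at $g$ is a bounded-window \emph{average} of the coefficients $\langle\pi(\gamma)\xi,\eta\rangle$ over lattice points near $g$, and lower-bounding $\int_{G}$ by $\sum_{\Gamma}$ fails because cancellation among neighbouring coefficients in this average can make the $G$-coefficient small while the individual $\Gamma$-coefficients are large.

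A proof would therefore require a genuinely new ingredient bridging the $C_{c}(G)$/multiplier gap: for instance an RD estimate for $G$ uniform in the concentration of the convolver (equivalently, one that persists for the sequences $\eta_{\epsilon}$ after dividing out $\norm{\eta_{\epsilon}}_{2}$), or a direct geometric comparison of $\ell_{\Gamma}$-balls built from test functions on $G$ that avoid concentration altogether. Absent such an idea, every natural approach collapses onto the identity $\norm{\lambda_{\Gamma}(f)}_{op}=\norm{\lambda_{G}(\nu_{f})}_{op}$ paired with an RD inequality that simply does not reach singular convolvers — which is why the statement remains a conjecture rather than a theorem.
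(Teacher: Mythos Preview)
The paper offers no proof of this statement: it is presented as an open problem (Valette's conjecture), with the text explicitly saying ``It is an open question whether the converse statement holds.'' There is nothing to compare your attempt against.

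Your write-up is therefore appropriate in spirit: you correctly recognise that the statement is a conjecture, not a theorem, and you do not claim to prove it. What you provide instead is a careful dissection of the most natural approach --- smearing $\nu_{f}$ by an approximate identity and invoking RD for $G$ --- together with a clear identification of where and why it fails (the $\norm{\eta_{\epsilon}}_{2}\to\infty$ divergence versus only strong convergence of $\lambda_{G}(\eta_{\epsilon})$, and the $C^{*}_{r}(\Gamma)\hookrightarrow M(C^{*}_{r}(G))$ but not $C^{*}_{r}(G)$ obstruction). This analysis is sound and goes well beyond what the paper itself says, which is simply that the question is open. Your parallel remark about the induced-representation route and cancellation in averaged matrix coefficients is also a fair description of the dual obstruction.

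In short: there is no gap in your reasoning, because you are not claiming a proof; and there is no paper proof to compare it with. Your discussion would be suitable as commentary on \emph{why} the conjecture is hard, which the paper does not supply.
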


\subsection{Degree of rapid decay}
\label{sec:degree-rapid-decay}

For any group \(G\) we may define the \emph{degree of rapid decay} of \(G\) as
\begin{equation}
  \deg_{RD} G = \inf\{ s > 0 : \text{\(\CC[G]\) embeds into \(H_{\ell}^{s}(G)\) for some length \(\ell\)}\}.
\end{equation}
Then \(G\) has property RD if and only if \(\deg_{RD}G\).

In Propositions \ref{thm:poly-growth-rd} and \ref{prop:rd-amenable} we have actually shown that if \(G\) has polynomial growth of degree \(d\), then \(\deg_{RD} G = d\). It is easy to show that \(\deg_{RD}G=0\) if and only if \(G\) is finite. Moreover, in \cite{Nica2010} the following theorem was proved\footnote{in the paper the \(\norm{\cdot}_{\ell,s}\) norm is defined as the \(\ell^{2}\)-norm of \(f(1+\ell)^{s}\), hence our \(\deg_{RD}\) is twice as large as the one used therein.}.

\begin{theorem}
  If \(G\) is an infinite finitely generated group, then \(\deg_{RD}G \geq 1\).
\end{theorem}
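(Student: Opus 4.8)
The plan is to reduce the lower bound $\deg_{RD} G \geq 1$ to a statement purely about growth and the geometry of balls, using amenability as a dichotomy. If $G$ has exponential growth, then for the word-length $\ell$ one has that for every $s$ the algebra $\CC[G]$ fails to embed into $H^{s}_{\ell}(G)$ unless one already gets a polynomial operator-norm bound; but the key observation is more elementary. I would instead argue directly: suppose $\CC[G] \subseteq H^{s}_{\ell}(G)$ embeds (as a convolution algebra into $C^{*}_{r}(G)$, equivalently Definition~\ref{def:h-s} holds with exponent $s$) for some proper length $\ell$. Test the defining inequality on the indicator functions $f_{n}$ of the $\ell$-balls of radius $n$, exactly as in the proof of Proposition~\ref{prop:rd-amenable}. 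One always has $\norm{f_n}_{op} \geq \langle \lambda(f_n)\delta_1, \delta_1\rangle \cdot(\text{something})$; more usefully, $\norm{f_n}_{op} \geq \norm{f_n * \delta_1}_2 / \norm{\delta_1}_2 = \norm{f_n}_2 = \gamma(n)^{1/2}$ — but this only gives $\gamma(n)^{1/2} \leq P(n)\gamma(n)^{1/2}$, which is vacuous. So the trivial test function is not enough, and the real content must come from using a cleverer test vector or from a counting argument about how many group elements can have a given length.

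The correct approach, I expect, is the following. Fix a proper length $\ell$ and the associated growth function $\gamma(n) = \abs{\set{x : \ell(x) \le n}}$. Since $G$ is infinite and $\ell$ is proper, $\gamma(n) \to \infty$, so $\gamma$ is unbounded and nondecreasing. Consider $f = \delta_A$ where $A = \set{x : \ell(x) \le n}$, and pair $\lambda(f)$ against $\xi = \delta_B$ for a suitable finite set $B$: we get $\langle \lambda(f)\xi, \xi\rangle = \abs{\set{(a,b,b') \in A\times B\times B : ab' = b}} = \abs{\set{(a,b)\in A\times B : a^{-1}b \in B}}$. Choosing $B$ to be a ball of the same radius $n$, we have $A^{-1}B = A B \supseteq B$ (since $1 \in A$), and more to the point $\abs{\set{(a,b) : ab \in B}} \ge \abs{B}$ trivially, but one can do better: for each $b$ with $\ell(b) \le n/2$, every $a$ with $\ell(a)\le n/2$ satisfies $\ell(ab)\le n$, so this count is at least $\gamma(\lfloor n/2\rfloor)^2$. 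Hence $\norm{f}_{op} \ge \langle\lambda(f)\xi,\xi\rangle/\norm{\xi}_2^2 \ge \gamma(n/2)^2/\gamma(n)$. Combined with the RD inequality $\norm{f}_{op} \le P(n)\norm{f}_2 = P(n)\gamma(n)^{1/2}$ this yields
\begin{equation}
  \gamma(n/2)^{2} \le P(n)\,\gamma(n)^{3/2}.
\end{equation}
If $\deg_{RD} G < 1$, then the exponent $s$ can be taken $< 2$ in $\norm{f}_{op}^2 \le C(1+\ell(f))^{s}\norm{f}_2^2$, i.e.\ $\norm{f}_{op} \le C^{1/2}(1+n)^{s/2}\gamma(n)^{1/2}$ with $s/2 < 1$; feeding this into the displayed inequality and iterating (comparing $\gamma(n)$ and $\gamma(n/2)$ across many scales $n = 2^k$) should force $\gamma$ to grow slower than every power, hence — since $G$ is infinite and finitely generated, so $\gamma(n)$ grows at least linearly — a contradiction. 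The main obstacle is calibrating the bookkeeping in this iteration so that the accumulated polynomial factors $P(n)$ do not swamp the gain; one wants to extract from $\gamma(n/2)^2 \le (1+n)^{s}\gamma(n)^{3/2}$ (with $s<2$) that $\log\gamma(2^k)$ satisfies a recursion whose solution is $o(2^k)$ yet $\ge c\cdot k$ — and in fact the refined statement is that $\gamma(n) = O(n^{c})$ for no finite $c$ unless $s \ge 1$, forcing $\deg_{RD} G \ge 1$.

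Alternatively — and this may be cleaner — one invokes the amenable/non-amenable split. If $G$ is amenable, Proposition~\ref{prop:rd-amenable} says RD with respect to $\ell$ forces polynomial growth of some degree $d$, and then the remark following that proof (that $\deg_{RD} G = d$ for polynomial growth of degree $d$, noted in \S\ref{sec:degree-rapid-decay}) together with $d \ge 1$ for any infinite finitely generated group (Bass--Guivarc'h, or simply: an infinite f.g.\ group has at least linear growth) gives $\deg_{RD} G \ge 1$. If $G$ is non-amenable, then $\CC[G]$ contains elements $f$ with $\norm{f}_{op}$ strictly smaller than $\norm{f}_1$, and one must produce, for each $n$, a test function of support radius $\sim n$ witnessing $\norm{f}_{op} \agtr n^{1/2}\norm{f}_2$ — here the Kesten-type spectral radius $\rho < 1$ enters and the powers $f = (\delta_S)^{*n}$ of the generating-set indicator give $\norm{(\delta_S)^{*n}}_{op} = (\norm{\delta_S}_{op})^n = (\abs{S}\rho)^n$ while $\norm{(\delta_S)^{*n}}_2 \le \abs{S}^{n/2}$ roughly, and $\ell((\delta_S)^{*n}) = n$; the inequality $\norm{f}_{op} \le P(n)\norm{f}_2$ then reads $(\abs S\rho)^n \le P(n)\abs{S}^{n/2}$, which constrains things but needs $\rho$ compared carefully to $\abs{S}^{-1/2}$. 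I expect the honest proof in \cite{Nica2010} sidesteps the amenability dichotomy entirely and runs the self-improving growth estimate of the previous paragraph; the hard part, in either route, is the iteration/bootstrapping that converts a single inequality relating $\gamma(n/2)$ and $\gamma(n)$ into the asymptotic lower bound $s \ge 1$.
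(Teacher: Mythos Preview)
The paper does not prove this theorem at all: it is stated and attributed to \cite{Nica2010}, and the only argument offered is the remark that follows it, namely that since \(\deg_{RD}\) is monotone under passing to subgroups, any \(G\) containing an element of infinite order satisfies \(\deg_{RD}G\geq\deg_{RD}\ZZ=1\), so the content of the theorem lies entirely in the torsion case. There is therefore nothing in the paper to compare your argument against beyond this one-line reduction.

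That said, both of your proposed routes have genuine gaps. In your first approach the inequality \(\gamma(n/2)^{2}\leq P(n)\gamma(n)^{3/2}\) is correctly derived, but it does \emph{not} force slow growth under iteration: plugging in \(\gamma(n)=c^{n}\) gives \(c^{n}\leq P(n)c^{3n/2}\), which is trivially satisfied. The inequality becomes weaker, not stronger, as \(\gamma\) grows, so no bootstrap of the kind you sketch can extract a constraint on \(s\) from it in the exponential-growth regime.

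In your second approach the amenable branch is fine (and is essentially what the paper records, since amenable plus RD implies polynomial growth of some degree \(d\geq 1\)). The non-amenable branch, however, does not close. For the convolution powers \(f=(\delta_{S})^{*n}\) one has \(\norm{f}_{2}^{2}=(\delta_{S})^{*2n}(1)\), which by the spectral theorem is of order \(\norm{\delta_{S}}_{op}^{2n}\) up to subexponential corrections; hence \(\norm{f}_{op}/\norm{f}_{2}\) is at most polynomial in \(n\) regardless of amenability, and these test functions cannot witness failure of RD at any exponent \(s>0\). More fundamentally, non-amenable groups need not contain an element of infinite order (free Burnside groups of large odd exponent, for instance), so you are still left with the torsion case that the paper itself flags as the nontrivial one --- and which, as the paper notes, is not even known to contain a single example with property RD. Nica's actual argument proceeds differently and does not pass through the amenable/non-amenable dichotomy.
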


It is clear from the definition that if \(H\leq G\), then \(\deg_{RD} H \leq \deg_{RD} G\). Therefore, if \(G\) contains an element of infinite order, then \(\deg_{RD}G \geq \deg_{RD}\ZZ = 1\), and this theorem is non-trivial, provided that there exists a torsion group with property RD. As far as I know, no such group has been discovered.

\begin{question}
  Does there exist an infinite, finitely generated torsion group with property RD?
\end{question}

The proof of Theorem \ref{prop:centroid-rd} gives an upper bound for the degree of rapid decay, namely
\begin{equation}
  \deg_{RD} G \leq 3\deg P,
\end{equation}
where \(P\) is the polynomial from the definition of the centroid map. Actually, the three conditions of the definition could use three different polynomials, \(P_{1}\), \(P_{2}\), and \(P_{3}\), yielding an estimate
\begin{equation}
  \deg_{RD}G \leq \deg P_{1} + \deg P_{2} + \deg P_{3}.
\end{equation}
For instance, if \(G\) is hyperbolic, then by \ref{prop:hyp-rd} one has
\begin{equation}
  \deg_{RD}G \leq 3,
\end{equation}
and if \(G\) acts on an \(n\)-dimensional CAT(0) cube complex with uniformly bounded stabilizers, then
\begin{equation}
  \deg_{RD} G \leq 3n.
\end{equation}
\begin{question}
  What are the possible values of \(\deg_{RD}\)? Can it take non-integer values?
\end{question}

\begin{question}
  Is \(\deg_{RD} G \leq 1\) if and only if \(G\) is virtually cyclic?
\end{question}

\subsection{Property RD for unitary representations}
\label{sec:property-rd-unitary}

Let \(\pi\colon G\to \mathcal{U}(\mathcal{H})\) be a unitary representation of \(G\). We say that \(\pi\) has property RD with respect to a length function \(\ell\) if there exists a polynomial \(P\) such that for all \(f\in\CC[G]\)
\begin{equation}
  \norm{\pi(f)}_{op} \leq P(\ell(f))\norm{f}_{2}.
\end{equation}
This is equivalent to the estimate
\begin{equation}
  \left( \sum_{x\in G} \frac{\abs{\langle \pi(g)\xi,\eta \rangle}^{2}}{(1+\ell(x))^{s}} \right)^{1/2} \leq C\norm{\xi}\norm{\eta}
\end{equation}
holding for some \(C,s>0\), and all \(\xi,\eta\in \mathcal{H}\).

One of the equivalent definitions of weak containment of unitary representations says that \(\sigma\) is weakly contained in \(\pi\) if \(\norm{f}_{\sigma}\leq \norm{f}_{\pi}\) for all \(f\in \CC[G]\). It is then clear that if \(\pi\) has property RD, so do all its weak subrepresentations. As we have seen in Section \ref{sec:decay-matr-coeff}, property RD for \(G\) is equivalent to property RD for its regular representation, or equivalently, all its tempered representations.

\begin{question}
  Is there a group without property RD, which admits a representation satisfying property RD?
\end{question}

A more general question would be:

\begin{question}
  For a given group \(G\), what is the description of the class of its representations satisfying property RD?
\end{question}

One can for instance easily prove that a finite-dimensional representation of \(G\) has property RD if and only if \(G\) has polynomial growth, which gives the first restriction on the class of representations with RD.

\section{Acknowledgements}
\label{sec:acknowledgements}

We wish to than Goulnara Arzhantseva for the invitation to participate in the Measured Group Theory program, thanks to which these notes were created.

%\nocite{Chatterji2005,Chatterji2007,Behrstock2011}\nocite{Drutu2005}\nocite{Ramagge1998}\nocite{Sapir2015}
%\bibliographystyle{plain}
%\bibliography{../../library}
\end{document}